\numberwithin{equation}{section}
\keywords{Combinatorics; Digraph; Spectral Radius; Random matrix; Heavy Tail.}
\subjclass{%
05C20; 
15B52; 
47A10; 
05C80
}
\theoremstyle{plain}
\newtheorem{theorem}{Theorem}[section]
\newtheorem{proposition}[theorem]{Proposition}
\newtheorem{lemma}[theorem]{Lemma}
\newtheorem{conjecture}[theorem]{Conjecture} 
\newcommand{\dE}{\mathbb{E}}
\newcommand{\dN}{\mathbb{N}} 
\newcommand{\dP}{\mathbb{P}}
\newcommand{\cA}{\mathcal{A}}
\newcommand{\cB}{\mathcal{B}}
\newcommand{\cC}{\mathcal{C}}
\newcommand{\cE}{\mathcal{E}}
\newcommand{\cG}{\mathcal{G}}
\newcommand{\cN}{\mathcal{N}}
\newcommand{\cS}{\mathcal{S}}
\newcommand{\cU}{\mathcal{U}}
\newcommand{\veps}{\varepsilon}
\newcommand{\bx}{\mathbf x}
\renewcommand{\leq}{\leqslant}             
\renewcommand{\geq}{\geqslant}             
\begin{document} 

\title[On the spectral radius of a random matrix]{On the spectral radius of a random matrix: an upper bound without fourth moment} %
\date{Submitted: July 29, 2016. Revised: May 12, 2017} %
\thanks{Partial support: A*MIDEX project ANR-11-IDEX-0001-02 funded by the
  ``Investissements d'Avenir'' French Government program, managed by the
  French National Research Agency (ANR)}

\author{Ch.~Bordenave}%
\address[Charles Bordenave]{CNRS \& Université de Toulouse, France}%
\author{P.~Caputo}%
\address[Pietro Caputo]{Università Roma Tre, Italy}%
\author{D.~Chafaï}%
\address[Djalil Chafaï]{Université Paris-Dauphine, France}%
\author{K.~Tikhomirov}%
\address[Konstantin Tikhomirov]{University of Alberta, Canada}%

\begin{abstract}
  Consider a square matrix with independent and identically distributed
  entries of zero mean and unit variance. It is well known that if the entries
  have a finite fourth moment, then, in high dimension, with high probability,
  the spectral radius is close to the square root of the dimension. We
  conjecture that this holds true under the sole assumption of zero mean and
  unit variance. In other words, that there are no
  outliers in the circular law. In this work we establish the conjecture in
  the case of symmetrically distributed entries with a finite moment of order
  larger than two. The proof uses the method of moments combined with a novel
  truncation technique for cycle weights that might be of independent
  interest.
\end{abstract}

\maketitle
\thispagestyle{empty}


\section{Introduction}

Let $X_N$ denote the random $N\times N$ matrix $(X_{i,j})_{i,j=1,\dots,N}$,
where $X_{i,j}$ are independent 
copies of a given complex valued random variable $\bx$ with mean zero and unit
variance:
\begin{equation}\label{ass}
  \dE[\bx]=0\quad\text{and}\quad\dE\big[|\bx|^2\big]=1.
\end{equation}
Let $\rho(X_N)$ denote the spectral radius of $X_N$: 
\begin{equation}\label{eq:rad}
  \rho(X_N):=\max\Big\{|\lambda|:\text{$\lambda$ eigenvalue of $X_N$}\Big\}\,.%
\end{equation}
The well known circular law states that, in probability, the empirical
distribution of the eigenvalues of $N^{-1/2}X_N$ weakly converges to the
uniform law on the unit disc of the complex plane
\cite{tao-vu-cirlaw-bis,around}. In particular, it follows that with high
probability
\begin{equation}\label{circ}
  \rho(X_N)\geq (1-\delta)\sqrt N \,,
\end{equation}
for any $\delta>0$ and large enough $N$. Here and below we say that a sequence of
events holds with high probability if their probabilities converge to one. The
corresponding upper bound on $\rho(X_N)$ has been established
by Bai and Yin \cite{baiyin} under a finite fourth moment assumption: if
$\dE[|\bx|^4]<\infty$, then with high probability $\rho(X_N)\leq (1+\delta)\sqrt
N$, for any $\delta>0$ and large enough $N$;
see also Geman and Hwang \cite{Geman-Hwang} and Geman \cite{Geman} for an
independent proof under stronger assumptions. Together with \eqref{circ}, this
says that if $ \dE[|\bx|^4]<\infty$ then, in probability, $\rho(X_N)/\sqrt
N\to 1$, as $N\to\infty$. We refer to \cite{Geman,baiyin} and references
therein for related estimates and more background and applications concerning
the spectral radius of a random matrix.
Surprisingly, there seems to be little or no discussion at all in the
literature -- even in the recent works \cite{tao} and \cite{borcap} --
about the necessity of the fourth moment assumption for the
behavior
$\rho(X_N)\sim\sqrt{N}$. 
We propose the following conjecture, which is illustrated by Figure
\ref{fig1}.

\begin{conjecture}\label{conju}
  The convergence in probability
  \begin{equation}\label{conju1}
    \lim_{N\to\infty}\frac{\rho(X_N)}{\sqrt N}=1,
  \end{equation}
  holds under the sole assumptions \eqref{ass}.
\end{conjecture}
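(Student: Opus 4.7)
Since the universal circular law yields the lower bound $\rho(X_N)\ge(1-\delta)\sqrt{N}$ with high probability under the sole assumptions \eqref{ass}, I focus on the matching upper bound. The bridge from the spectral radius to a computable moment is the inequality $\rho(X_N)^{2k}\le\mathrm{tr}\bigl(X_N^k(X_N^*)^k\bigr)=\|X_N^k\|_{\mathrm{HS}}^2$, valid for every integer $k\ge 1$. A Markov estimate with $k=k_N\asymp\log N$ then reduces Conjecture \ref{conju} to the asymptotic moment bound $\dE\bigl[\mathrm{tr}(X_N^k(X_N^*)^k)\bigr]\le(1+o(1))^k N^{k+1}$.

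To bypass the finite fourth moment hypothesis of Bai--Yin, I propose a layered truncation without any symmetry input. Set $T_N:=\sqrt{N}/\log^c N$ for a small constant $c>0$, let $\mu_N:=\dE[\bx\,\mathbf{1}_{|\bx|\le T_N}]$, and decompose $X_{i,j}=A_{i,j}+H_{i,j}$ with $A_{i,j}:=X_{i,j}\mathbf{1}_{|X_{i,j}|\le T_N}-\mu_N$ and $H_{i,j}:=X_{i,j}\mathbf{1}_{|X_{i,j}|>T_N}+\mu_N$. Both summands have zero mean, $|A_{i,j}|\le 2T_N$ is deterministically bounded, and Cauchy--Schwarz together with $\dE[|\bx|^2\mathbf{1}_{|\bx|>T_N}]=o(1)$ yields $|\mu_N|=o(1/T_N)$. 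The sparse heavy matrix $H_N$ is then treated by a dyadic decomposition across scales, with few nonzero entries of controlled size on each scale, so that standard sparse random matrix norm estimates, combined with a rank-perturbation argument for the $O(1)$ extremely large entries, give $\rho(H_N)\le\|H_N\|_{\mathrm{op}}=o(\sqrt{N})$ with high probability.

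For the bounded centered matrix $A_N$, I would apply the novel cycle-weight truncation developed in this paper to the double-path expansion of $\dE\bigl[\mathrm{tr}(A_N^k(A_N^*)^k)\bigr]$. The key structural point is that at leading order each entry $A_{i,j}$ pairs with its conjugate $\overline{A_{i,j}}$ coming from the $X^*$ factors, so its contribution depends only on $\dE[|A_{i,j}|^2]=1-o(1)$; symmetry of the entry law plays no role at this level. Subleading terms arise from cycles in which some entries appear an odd number of times, and the corresponding expectations reduce to restricted odd moments of $\bx$, each of size at most $o(1/T_N)$ by the centering. Combined with the entrywise bound $|A_{i,j}|\le 2T_N$, each such unpaired visit should produce an $o(1)$ factor in the cycle weight that absorbs its combinatorial multiplicity.

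The main obstacle, and the reason the paper establishes only the symmetric case, is precisely this last step: propagating the control of unpaired visits through the full cycle expansion uniformly in $k_N\asymp\log N$. In the symmetric case, any cycle with an unpaired entry vanishes exactly, which simplifies the combinatorics dramatically; in the general mean-zero case such cycles survive, and their joint contribution must be analyzed by a refined graph-theoretic classification based on the pattern of unpaired visits, together with a sharper cycle-weight truncation that internalizes truncated odd moments of $\bx$. I expect this combinatorial refinement to be the delicate part where the present method genuinely must be extended, and it may well require new ideas before Conjecture \ref{conju} can be settled in full generality.
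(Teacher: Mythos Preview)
The paper does not prove this statement: it is stated as a conjecture, and only the special case of symmetric entries with a finite $2+\varepsilon$ moment is established (Theorems \ref{main*} and \ref{main}). Your proposal is a sketch toward the full conjecture, and you acknowledge it is incomplete; but several steps are not just incomplete, they fail outright.

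The decomposition $X_N=A_N+H_N$ followed by separate control of $\rho(A_N)$ and $\|H_N\|_{\mathrm{op}}$ cannot be reassembled: the spectral radius is not subadditive, and for non-normal matrices there is no usable inequality of the form $\rho(A+H)\le\rho(A)+\|H\|$ (an $N\times N$ nilpotent Jordan block has $\rho=0$, yet a single corner entry of size $\eta$ moves $\rho$ to $\eta^{1/N}$). Worse, the claim $\|H_N\|_{\mathrm{op}}=o(\sqrt N)$ is false under \eqref{ass} alone: if $\dP(|\bx|>t)\sim t^{-2}$ then $\max_{i,j}|X_{i,j}|$ is of order $N$, so $\|H_N\|_{\mathrm{op}}\gtrsim N$, and the number of entries exceeding any fixed multiple of $\sqrt N$ is not $O(1)$ but grows polynomially in $N$, so no finite-rank removal helps---even setting aside that rank perturbations do not control $\rho$ for non-normal matrices.

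Even the part you delegate to the paper's method does not go through. Lemma \ref{cycle stats} and Proposition \ref{exp prop} require $\dE[|\bx|^{2+\varepsilon}]\le B$ with $B$ \emph{independent of $N$}; this is precisely what yields the saving $N^{-\varepsilon y_x/16}$. Your truncated entry satisfies only $\dE[|A_{i,j}|^{2+\varepsilon}]\lesssim T_N^{\varepsilon}\asymp N^{\varepsilon/2}$, and with $B=B_N$ growing polynomially the bound in Proposition \ref{exp prop} becomes vacuous (the exponent $\frac{4k\log B}{\varepsilon\log N}$ is of order $k$). Finally, symmetry in the paper is used for more than killing odd-multiplicity edges in expectation: it allows conditioning on the event $\cE_k$, which is measurable in $\{|X_{i,j}|\}$, while keeping the signs i.i.d.\ symmetric, so that the parity cancellation survives the conditioning. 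Without symmetry there is no magnitude/phase splitting of this kind, and your claim that each unpaired visit contributes an $o(1/T_N)$ factor absorbing its combinatorial multiplicity is unsupported---the number of paths with a prescribed pattern of unpaired edges grows super-polynomially in $k$.
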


\begin{figure}[htbp]
  \begin{center}
    \includegraphics[width =.49\textwidth]{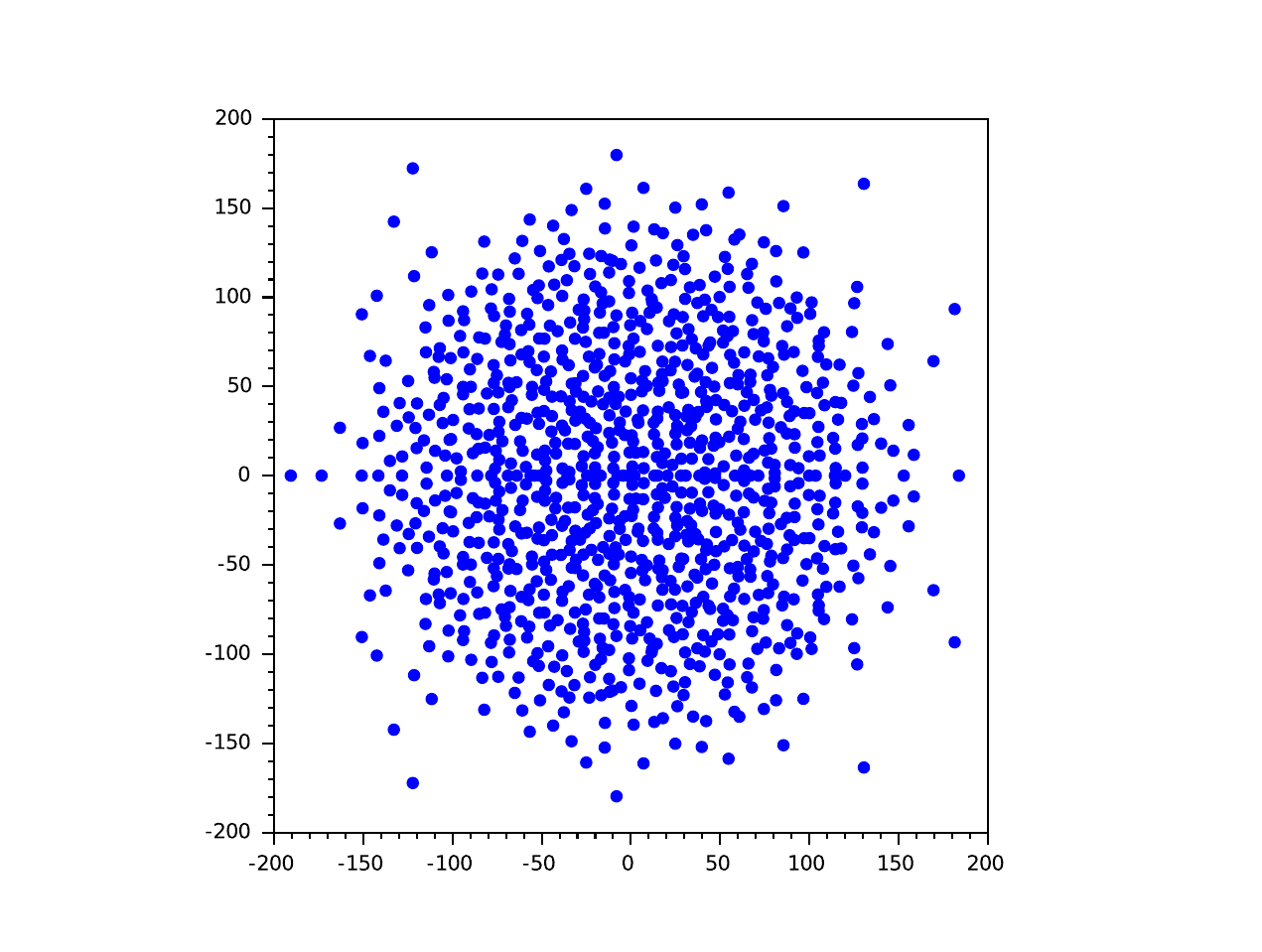}
    \includegraphics[width =.49\textwidth]{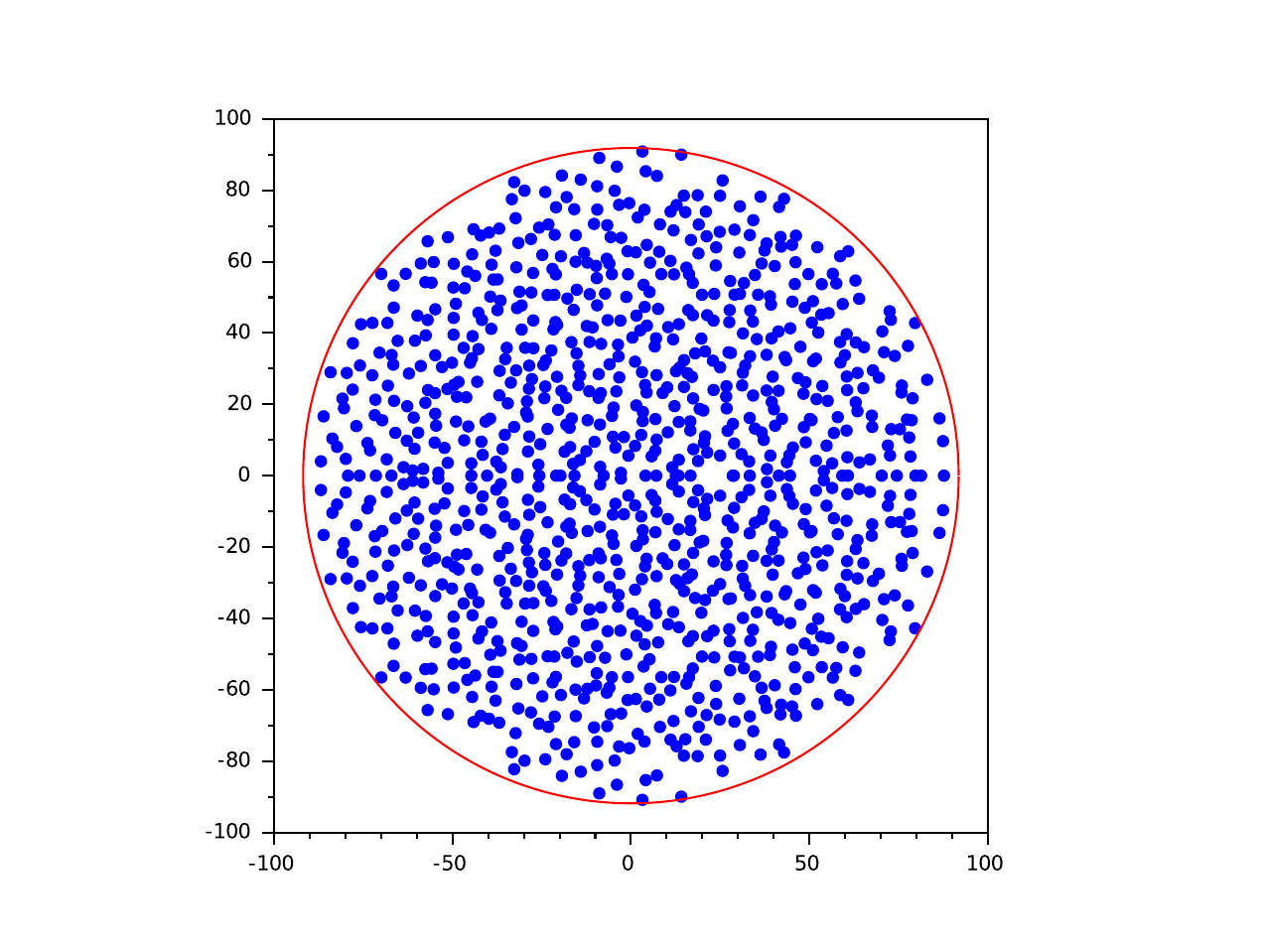}
  \end{center}
  \caption{The dots are the eigenvalues of a single realization of $X_N$ where
    $N = 1000$ and $\bx$ is real with distribution given by 
    \[
    \dP ( \bx > t ) = \dP ( \bx < - t) = \frac{1}{2t^{\alpha}},\quad t\geq1,
    \]
    with $\alpha = 1.8$ (left) and $\alpha = 2.2$ (right). The circle has
    radius $ \sqrt{ (\dE |\bx|^2 ) N}$. }
  \label{fig1}
\end{figure}

Another way to put this is to say that there are no outliers in the circular
law. This phenomenon reveals a striking
contrast between eigenvalues and singular values of $X_N$, the latter
exhibiting Poisson distributed outliers in absence of a fourth moment, see for
instance \cite{Soshnikov,Auffingeretal}. A tentative heuristic explanation of
this phenomenon may proceed as follows. Suppose $\bx$ has a heavy tail of
index $\alpha$, that is $ \dP(|\bx|>t)\sim t^{-\alpha}$, as $t\to\infty$. If
$\alpha\in(2,4)$, then with high probability in the matrix $X=X_N$ there are
elements $X_{i,j}$ with $|X_{i,j}|>N^{\beta}$, for any $1/2<\beta<2/\alpha$.
Any such element is sufficient to produce a singular value diverging as fast
as $N^\beta$. On the other hand, to create a large eigenvalue, a single large
entry is not sufficient. Roughly speaking one rather needs at least one
sequence of indices $i_1,i_2,\ldots,i_{k+1}$ with $i_1=i_{k+1}$ with a large
product $\prod_j|X_{i_j,i_{j+1}}|$, i.e.\ one cycle with a large weight if we
view the matrix as an adjacency matrix of an oriented and weighted graph. It
is not difficult to see that the sparse matrix consisting of all entries
$X_{i,j}$ with $|X_{i,j}|>N^{\beta}$ is acyclic 
with high probability, as long as $\alpha\beta>1$.

Somewhat similar phenomena should be expected for heavy tails with index
$\alpha\in(0,2)$. As shown in \cite{heavygirko}, in that case the circular law
must be replaced by a new limiting law $\mu_\alpha$ in the complex plane. More
precisely, the empirical distribution of the eigenvalues of $X/N^{1/\alpha}$
tends weakly as $N\to\infty$ to a rotationally invariant light tailed law
$\mu_\alpha$, while the empirical distribution of the singular values of
$X/N^{1/\alpha}$ tends weakly as $N\to\infty$ to a heavy tailed law
$\nu_\alpha$. By the above reasoning, no significant outliers should appear in
the spectrum. The precise analogue of \eqref{conju1} in this case is however
less obvious since the support of $\mu_\alpha$ is unbounded. From the tail of
$\mu_\alpha$, one might expect that the spectral radius is of order
$N^{1/\alpha} (\log N)^{1/\alpha +o(1)}$ while typical eigenvalues are of
order $N^{1/\alpha}$.

%

In this paper we prove that the conjectured behavior \eqref{conju1} holds if
$\bx$ is symmetric and has a finite moment of order $2+\veps$ for an arbitrary
$\veps>0$. We say that $\bx$ is symmetric if the law of $\bx$ coincides with the
law of $-\bx$.

\begin{theorem}\label{main*}
  Suppose that $\bx$ is symmetric and that $\dE\left[|\bx |^{2}\right]=1$.
  Suppose further that $\dE\left[|\bx |^{2+\veps}\right]<\infty $ for some
  $\veps>0$. Then, in probability,
  \begin{equation}\label{main1}
    \lim_{N\to\infty}\frac{\rho(X_N)}{\sqrt N}=1.
  \end{equation}
\end{theorem}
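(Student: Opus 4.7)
Since the lower bound $\rho(X_N) \geq (1-\delta)\sqrt{N}$ with high probability is already a consequence of the circular law, it suffices to prove the matching upper bound. I would use the method of moments, starting from the classical matrix inequality
\begin{equation*}
\rho(X_N)^{2k} \;=\; \rho\bigl(X_N^{k}\bigr)^{2} \;\leq\; \|X_N^{k}\|^{2} \;\leq\; \mathrm{tr}\bigl(X_N^{k}(X_N^{k})^{\ast}\bigr),
\end{equation*}
valid for every integer $k \geq 1$, where $\|\cdot\|$ denotes the operator norm. With $k=k(N)$ chosen to grow with $N$ at a rate tuned to $\veps$ (for instance a multiple of $\log N$ or a small power of $N$), Markov's inequality reduces the question to proving
\begin{equation*}
\dE\bigl[\mathrm{tr}(X_N^{k}(X_N^{k})^{\ast})\bigr] \;\leq\; (1+o(1))^{2k}\,N^{k+1}.
\end{equation*}

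The trace $\mathrm{tr}(X_N^{k}(X_N^{k})^{\ast})=\sum_{i,j}|(X_N^{k})_{i,j}|^{2}$ expands as a sum over pairs of length-$k$ walks with matching endpoints, or equivalently over closed walks of length $2k$ on the complete graph on $\{1,\dots,N\}$. Because $\bx$ is symmetric and the entries are independent, a walk contributes to the expectation only when every directed edge is traversed an even total number of times (counting forward and backward passes together). Classifying such walks by the isomorphism class of their skeleton and arguing in the spirit of Bai--Yin, the leading contribution comes from walks in which each edge is used exactly twice: here one only uses $\dE|\bx|^{2}=1$, and the classical combinatorial count produces the desired $N^{k+1}$ up to subexponential corrections in $k$.

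The main obstacle is the contribution of the remaining walks, which contain at least one edge of multiplicity $\geq 4$. In the fourth-moment setting of Bai--Yin these are handled by invoking $\dE|\bx|^{2m}<\infty$ directly, whereas under our assumption $\dE|\bx|^{2m}$ may be infinite for every $m \geq 2$. To deal with this I would implement the truncation announced in the abstract, acting on \emph{cycle weights} rather than on individual entries. The rough idea is that each walk decomposes into simple cycles, and the damaging contributions come from cycles whose entry-product $\prod_{e\in C}|X_{e}|^{m_{e}}$ is anomalously large; I would pick a threshold $L_{N}$ and work with a modified matrix $\widetilde X_{N}$ in which these cycle weights are capped. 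One then has to show, on one hand, that $\widetilde X_{N}=X_{N}$ with high probability (so that the spectra coincide), and on the other that the truncated cycle weights admit moment estimates sharp enough to absorb the high-multiplicity walks into the subexponential factor. The delicate point is precisely the design of this truncation: entry-by-entry truncation distorts the spectrum too much in the heavy-tailed regime, so the cutoff must be genuinely cycle-based while remaining compatible with the trace-moment expansion in order to be useful.
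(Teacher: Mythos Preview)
Your high-level strategy matches the paper's: the lower bound from the circular law, the upper bound via $\rho(X)^{2k-2}\leq\mathrm{Tr}\bigl((X^*)^{k-1}X^{k-1}\bigr)$, the path expansion, the use of symmetry to restrict to walks where every directed edge has even multiplicity, the decomposition of such walks into cycles, and $k$ growing polylogarithmically (the paper takes $k\sim(\log N)^2$). The gap is in the implementation of the cycle-weight truncation, which is precisely the heart of the argument.

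You propose to build a modified matrix $\widetilde X_N$ with ``capped cycle weights'' and then show $\widetilde X_N=X_N$ with high probability. This cannot work as written. First, there is no natural matrix with prescribed cycle weights: a matrix is determined by its entries, and altering one entry changes many cycles at once. Second, and more fundamentally, under a bare $(2+\veps)$-moment assumption there \emph{will} be cycles---indeed single entries---of polynomial size in $N$ with high probability, so no event of the form ``every cycle weight lies below $L_N$'' can have high probability for a useful $L_N$. The paper's device is different: one does not modify $X_N$ at all, but conditions on an event $\cE_k$ asserting that certain \emph{empirical averages} over all $m$-cycles, namely $|\cC_m|^{-1}\sum_{C\in\cC_m}|w(C)|^{2}$ and $|\cC_m|^{-1}\sum_{C\in\cC_m}|w(C)|^{2+\veps}$, stay within a factor $k^2$ of their means for every $m\leq k$. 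This event has probability $1-O(1/k)$ by a first-moment bound, and---the point you are missing---it is measurable with respect to $\{|X_{ij}|\}$, so the i.i.d.\ symmetric signs survive the conditioning and the parity cancellation still applies to the conditional expectation. On $\cE_k$ one then bounds the contribution of each unlabeled rooted even digraph \emph{deterministically} by an induction that peels off one double cycle at a time: the $(2+\veps)$-control on the cycle statistics yields a gain $N^{-c\veps r}$ per cycle (where $r$ is the overlap with the previously built graph), and this beats the combinatorial loss $(Ck)^{O(k-\ell)}$ from path and digraph counting. The dichotomy you draw between ``each edge exactly twice'' and ``high-multiplicity'' walks is not how the argument is organized; all even digraphs are handled uniformly through this statistics-and-induction machinery.
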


In view of \eqref{circ}, to prove the theorem one only needs to establish the
upper bound $\rho(X_N)\leq (1+\delta)\sqrt N$ with high probability, for every $\delta>0$.
We shall prove the following stronger non-asymptotic estimate, covering
variables $\bx$ whose law may depend on $N$.

\begin{theorem}\label{main}
  For any $\veps,\delta>0$ and $B>0$, there exists a constant
  $C=C(\veps,\delta,B)>0$ such that for any $N\in\dN$, for any symmetric
  complex random variable $\bx$ with $\dE\left[|\bx |^{2}\right]\leq 1$ and
  $\dE\left[|\bx |^{2+\veps}\right]\leq B$, we have
  \begin{equation}\label{main3}
    \dP\big(\rho(X_N)\geq (1+\delta)\sqrt N\big)\leq \frac{C}{(\log N)^2}.
  \end{equation}
\end{theorem}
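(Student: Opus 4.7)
The plan is to prove Theorem~\ref{main} by the method of moments. The starting point is the inequality $\rho(X_N)^{2k} \leq \mathrm{tr}\bigl(X_N^k (X_N^*)^k\bigr)$, valid for any integer $k \geq 1$ (since $\rho(M)^2 \leq \mathrm{tr}(MM^*)$ applied to $M=X_N^k$), which via Markov's inequality yields
\[
\dP\bigl(\rho(X_N) \geq (1+\delta)\sqrt{N}\bigr) \leq \frac{\dE\bigl[\mathrm{tr}(X_N^k (X_N^*)^k)\bigr]}{(1+\delta)^{2k} N^k}.
\]
I would choose $k=k(N)$ to grow with $N$, large enough so that $(1+\delta)^{-2k}$ overwhelms polynomial combinatorial losses, but small enough that heavy-tail truncation errors remain manageable; the final $(\log N)^{-2}$ target in \eqref{main3} is consistent with $k$ of order a small power of $\log N$.

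Expanding the expected trace produces a sum over closed walks of length $2k$ on $\{1,\dots,N\}$, consisting of $k$ forward steps using entries of $X_N$ and $k$ backward steps using conjugated entries. Since $\bx$ is symmetric, all odd joint moments vanish, so only walks in which every directed edge appears an even number of times can contribute. In the extreme ``tree'' case, each edge is used exactly twice; the number of such walks is of order $N^{k+1}$ times a Catalan-type factor, and each contributes at most $\prod_e \dE|\bx|^2 \leq 1$, producing the target main term $N^k$. The difficulty is walks whose image graph contains excess edges forming cycles, or edges of high multiplicity, where naive bounds would require moments of $\bx$ beyond $2+\veps$.

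To control these, I would truncate. The introduction's heuristic already suggests the key idea: with threshold $T = N^{\beta}$, $\beta$ slightly above $1/(2+\veps)$, the graph of entries with $|X_{ij}|>T$ is acyclic with high probability, since $\beta(2+\veps)>1$. The novelty promised in the abstract, cycle-weight truncation, would proceed as follows: rather than truncating entries one by one, decompose the trace expansion according to walk shape, isolate for each shape the cycles (the excess edges beyond a spanning tree), and truncate the product of entries along each such cycle at a single $N$-dependent level. The acyclicity of the heavy support then forces every cycle in a walk's image to contain at least one light edge, so its total weight has a controllable moment and truncation introduces only a small error.

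The principal obstacle will be quantifying this step: one needs estimates, uniform over all admissible walk shapes, both for the probability that a cycle's weight exceeds its threshold, and for the expected value of the truncated product in terms of the cycle length and the number of light edges it contains. The subtle point is that ``purely heavy'' cycles being absent is not enough --- one needs a quantitative tail bound for the cycle-weight random variable that remains strong even when most of the cycle's edges are heavy. Once these ingredients are in place, the combinatorial counting of walk shapes proceeds along Bai--Yin lines, with edge weights $\dE|\bx|^2\leq 1$ on tree edges replaced by truncated cycle weights on the excess, and a careful balancing of $k$, $T$, and the cycle truncation level should deliver the $(\log N)^{-2}$ bound in \eqref{main3}.
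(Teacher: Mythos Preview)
Your outline has the right skeleton---method of moments via $\rho^{2k}\leq\mathrm{tr}(X^k(X^*)^k)$, symmetry killing odd edges, and the recognition that the whole difficulty lies in controlling cycle weights---but the central technical mechanism you describe is not the one that actually works, and the proposal as written leaves the hard step unresolved.

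Two specific points of divergence from the paper's argument:

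\textbf{Decomposition.} You propose to split each walk's image into a spanning tree plus ``excess edges forming cycles'', handling tree edges via $\dE|\bx|^2\leq 1$ and cycle edges via some truncation. The paper does not do this. Because only even walks survive the symmetry reduction, the image is an \emph{even digraph} (every directed edge has even multiplicity), and the natural decomposition is into \emph{double cycles} via a Veblen-type lemma (Lemma~\ref{veblen}). There is no tree part at all; every piece is a cycle repeated twice. The combinatorics is then organized by counting rooted even digraphs with given vertex and edge counts (Lemmas~\ref{paths and graphs}--\ref{graphs counting}), and the weight estimate is done inductively by peeling off one double cycle at a time (Lemma~\ref{induction}, Proposition~\ref{exp prop}).

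\textbf{The truncation mechanism.} Your plan---truncate entries at $N^\beta$, invoke acyclicity of the heavy part, then truncate each cycle's product---does not close the gap you yourself flag (``one needs a quantitative tail bound for the cycle-weight random variable that remains strong even when most of the cycle's edges are heavy''). The paper's solution is different in kind: rather than truncate and take expectations, it \emph{conditions} on a high-probability event $\cE_k$ on which certain empirical \emph{cycle statistics} are controlled (Lemma~\ref{cycle stats}), namely that the normalized counts $\cS_h(\cC_m)=2^h|\{C:|w(C)|^2\geq 2^h\}|/|\cC_m|$ satisfy $\sum_h\cS_h(\cC_m)\leq k^2$ and $\sum_h 2^{h\veps/2}\cS_h(\cC_m)\leq k^2 B^m$. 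These hold with probability $\geq 1-O(1/k)$ by Markov applied to $\dE[|w(C)|^2]\leq 1$ and $\dE[|w(C)|^{2+\veps}]\leq B^m$. On $\cE_k$ one then bounds $\sum_G p_r(G)$ \emph{deterministically}, using a dyadic decomposition of cycle weights and the induction over double cycles. Conditioning is essential because the event is measurable with respect to $\{|X_{ij}|\}$ only, so the random signs remain i.i.d.\ and symmetry still kills odd edges after conditioning; a direct expectation after cycle-weight truncation would entangle the entries and the symmetry argument would no longer apply cleanly.

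Finally, $k$ is taken of order $(\log N)^2$, not a sublogarithmic power; this is needed so that $(1+\delta)^{-2k}$ beats the $(\log N)^{C\log N}$ combinatorial factor arising in \eqref{mainas}.
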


The rest of this note is concerned with the proof of Theorem \ref{main}. We
finish this introduction with a brief overview of the main arguments involved.

\subsection{Overview of the proof}

The proof of Theorem \ref{main} combines the classical method of moments with
a novel cycle weight truncation technique. For lightness of notation, we write
$X$ instead of $X_N$. The starting point is a standard general bound on
$\rho(X)$ in terms of the trace of a product of powers of $X$ and $X^*$. Let
$\|X\|$ denote the operator norm of $X$, that is the maximal eigenvalue of
$\sqrt{X^*X}$, which is also the largest singular value of $X$. Recall the
Weyl inequality $\rho(X)\leq\|X\|$. For any integer $m\geq 1$ one has
\[
\rho(X)=\rho(X^m)^{1/m}\leq \|X^m\|^{1/m}
\quad\text{and}\quad
\|X^m\|^2\leq \mathrm{Tr}((X^*)^mX^m).
\]
It follows that for any integer $k\geq 2$, setting $m=k-1$,
\begin{equation}\label{main4}
  \rho(X)^{2k-2}\leq \mathrm{Tr}((X^*)^{k-1}X^{k-1}) = \sum_{i,j}[X^{k-1}]_{i,j}[(X^*)^{k-1}]_{j,i}.
\end{equation}
Expanding the summands in \eqref{main4} one obtains
\begin{equation}\label{main5}
  \rho(X)^{2k-2}\leq \sum_{i,j}\sum_{P_1,P_2:i\mapsto j}
  w(P_1)\bar w(P_2),
\end{equation}
where the internal sum ranges over all paths $P_1$ and $P_2$ of length $k-1$ from $i$ to $j$, the weight $w(P)$ of a path $(i_1,\dots,i_{k})$ is defined by
\begin{equation}\label{main6}
  w(P):=\prod_{\ell=1}^{k-1}X_{i_{\ell},i_{\ell+1}}\,,
\end{equation}
and $\bar w(P)$ denotes the complex conjugate of $w(P)$. So far we have not
used any specific form of the matrix entries.

As a warm up, it may be instructive to analyze the following simple special
case. Assume that $X_{i,j}$ has the distribution
\begin{equation}\label{ex1}
  \bx=
  \begin{cases}
    \pm q^{-\tfrac{1-\veps}2} 
    & \text{with probability $\frac{q}{2}$},\\ 
    0 
    & \text{with probability $1-q$},
  \end{cases}
\end{equation}
where $q=q_N\in(0,1]$ is a parameter that may depend on $N$, while
$\veps\in(0,1)$ is a fixed small constant. If $q_N\equiv 1$, then we have a
uniformly random $\pm1$ matrix, while if $q_N\to 0$, $N\to\infty$ one has a
matrix that may serve as a toy model for the sparse matrices from the
heuristic discussion given above. Notice that the assumptions of Theorem
\ref{main} are satisfied with the same parameter $\veps$ and with $B=1$, since
\[
\dE\left[|\bx |^{2}\right]=q^{\veps}%
\quad\text{and}\quad%
\dE\left[|\bx |^{2+\veps}\right]\leq q^{\veps/2}.
\]
We can now take expectation in \eqref{main5}. Using the symmetry of $\bx$ we
may restrict the sum over paths $P_1,P_2$ satisfying the constraint that in
the union $P_1\cup P_2$ each directed edge $(i_\ell,i_{\ell+1})$ appears an
even number of times. We say that $P_1\cup P_2$ is {\em even}. In this case
$\dE[w(P_1)\bar w(P_2)]= q^{-(1-\veps)(k-1)}q^{n}$, where $n$ is the number of
edges in $P_1\cup P_2$ without counting multiplicities. Let $P$ denote the
path obtained as follows: start at $i$, follow $P_1$, then add the edge
$(j,i)$, then follow $P_2$, then end with the edge $(j,i)$ again. Thus, $P$ is
an even path of length $2k$, and it is {\em closed}, 
that is, the start point and end point of $P$ coincide. Notice that
\[
\dE\left[w(P_1)\bar w(P_2)\right] \leq q^{-\veps}\dE[w(P)].
\]
Since the map $(P_1,P_2)\mapsto P$ is injective we have obtained
\begin{equation}\label{main50}
  \dE\left[\rho(X)^{2k-2}\right]\leq q^{-\veps}\sum_{P}\dE[w(P)],
\end{equation}
where the sum ranges over all even closed paths of length $2k$. Observe that
\[
\dE\left[w(P)\right]\leq q^{-(1-\veps)k}q^{\ell},
\]
where $\ell$ is the number of distinct vertices in $P$. Therefore, letting
$\cN(k,\ell)$ denote the number of even closed paths of length $2k$ with
$\ell$ vertices, \eqref{main50} is bounded above by
\begin{equation}\label{main7}
  \sum_{\ell=1}^k \cN(k,\ell)q^{-\veps}q^{-(1-\veps)k}q^{\ell}.
\end{equation}
Combinatorial estimates to be derived below, see Lemma \ref{paths and graphs}
and Lemma \ref{graphs counting}, imply that $\cN(k,\ell)\leq
k^2(4k)^{6(k-\ell)}N^\ell$. Putting all together we have found
\begin{equation}\label{main8}
  \dE\left[\rho(X)^{2k-2}\right]\leq 
  k^2 N^{k}\sum_{\ell=1}^k a(k,N,q)^{k-\ell}
\end{equation}
where $a(k,N,q)=(4k)^{6}(Nq^{(1-\veps)})^{-1}$. We choose $k\sim (\log N)^2$.
Suppose that $q\geq N^{-1-\veps}$. Then $Nq^{(1-\veps)}\geq N^{\veps^2}$ and therefore
$a(k,N,q)\leq 1$ if $N$ is large enough. It follows that
$\dE[\rho(X)^{2k-2}]\leq k^3 N^k$, and by Markov's inequality, for all fixed
$\delta>0$:
\begin{align}\label{main30}
  \dP\left(\rho(X)%
    \geq (1+\delta)\sqrt N\right)&\leq (1+\delta)^{-2k+2}N^{-k+1}\dE[\rho(X)^{2k-2}]\nonumber
  \\ & \leq (1+\delta)^{-2k+2}k^3N.
\end{align}
Since $k\sim (\log N)^2$ this vanishes faster than $N^{-\gamma}$ for any $\gamma>0$.
On the other hand, if $q\leq N^{-1-\veps}$, then a different, simpler argument
can be used. Indeed, since an acyclic matrix is nilpotent, it follows that if
$\rho(X)>0$ then there must exist a cycle with nonzero entries from the matrix
$X$. The probability of a given such cycle is $q^{\ell}$ where $\ell$ is the
number of vertices of the cycle. Estimating by $N^\ell$ the number of cycles
with $\ell$ vertices one has
\begin{equation}\label{main80}
  \dP\left[\rho(X)>0\right]\leq \sum_{\ell=1}^\infty (qN)^\ell.
\end{equation}
Thus, if $q\leq N^{-1-\veps}$, then $\dP[\rho(X)>0]\leq 2qN\leq 2N^{-\veps}$. This
concludes the proof of \eqref{main3} in the special case of the model
\eqref{ex1}.

The given argument displays, albeit in a strongly simplified form, some of the
main features of the proof of Theorem \ref{main}: the role of symmetry, the
role of combinatorics, and the fact that cycles with too high weights have to
be ruled out with a separate probabilistic estimate. The latter point requires
a much more careful handling in the general case. Since it represents the main
technical novelty of this work, let us briefly illustrate the main idea here.
Consider the collection $\cC_m$ of all possible oriented cycles with $m$ edges
of the form $C=(i_1,\dots,i_{m+1})$ with $i_j\in\{1,\dots,N\}$, and with no
repeated vertex except for $i_1=i_{m+1}$. Let $\nu_m$ denote the uniform
distribution over the set $\cC_m$. Given the matrix $X_N$, we look at the
weight $|w(C)|^{2t}$ corresponding to the cycle $C$ repeated $2t$ times, where
$w(C)$ is defined in \eqref{main6}. Since one can restrict to even closed
paths, and each such path can be decomposed into cycles that are repeated an
even number of times, it is crucial to estimate the empirical averages
\[
\nu_m\left[|w(C)|^{2t}\right] = \frac1{|\cC_m|}\sum_{C\sim\cC_m}|w(C)|^{2t},
\]
where the sum runs over all cycles with $m$
edges and $|\cC_m|$ denotes the total number of them. 
Broadly speaking, we will define an event $\cE_k$ by requiring that 
\begin{equation}\label{mainid}
  \nu_m\left[|w(C)|^{2}\right]\leq k^2\,,\quad \text{and}
  \quad\nu_m\left[|w(C)|^{2+\veps}\right]\leq k^2 B^m,
\end{equation}
for all $m\leq k$, where as before $k\sim(\log N)^2$. The assumptions of
Theorem \ref{main} ensure that $\cE_k$ has large probability by a first moment
argument. Thus, in computing the expected values of $w(P)$ we may now
condition on the event $\cE_k$. Actually, on the event $\cE_k$ we will be able
to estimate deterministically the quantities $\nu_m\left[|w(C)|^{2t}\right]$.
To see this, observe that if 
\[
w_{\max}:= \max_{C\sim\cC_m}|w(C)|
\]
denotes the maximum weight for a cycle with $m$ edges, then
\[
w_{\max}^2= \Big(\max_{C\sim\cC_m}|w(C)|^{2+\veps}\Big)^{\frac1{1+\veps/2}} \leq 
\Big(\sum_{C\sim\cC_m}|w(C)|^{2+\veps}\Big)^{\frac1{1+\veps/2}}.
\]
If $\veps$ is small enough, on the event $\cE_k$, from \eqref{mainid} one has
$w_{\max}^2\leq (|\cC_m| k^2B^m)^{1-\veps/4}$. Since $|\cC_m|\leq N^m$, a simple
iteration proves that for any $t\geq 1$:
\begin{equation}\label{mainid2}
  \nu_m\left[|w(C)|^{2t}\right]\leq (k^2N^mB^m)^{t(1-\veps/4)} \leq N^{mt(1-\veps/8)},
\end{equation}
for all $N$ large enough. The bound \eqref{mainid2} turns out to be sufficient
to handle all paths $P$ of the form of a cycle $C\sim\cC_m$ repeated $2t$
times, for all $m\leq k$. To control more general even closed paths $P$ one
needs a more careful analysis involving the estimate of larger empirical
averages corresponding to various distinct cycles at the same time. We refer
to Section \ref{statistics} below for the details. The combinatorial estimates
are worked out in Section \ref{combi}.
Finally, in Section \ref{mainproof} we complete the proof of Theorem \ref{main}.

\section{Counting paths and digraphs}\label{combi}

We first introduce the basic graph theoretic terminology and then prove some
combinatorial estimates.

\subsection{Multi digraphs and even digraphs} 

For each natural $N$, $[N]$ denotes the set $\{1,2,\dots,N\}$. A directed
graph, or simply digraph, on $[N]$, is a pair $G =(V,E)$, where $V\subset[N]$
is the set of vertices and $E\subset [N]\times[N]$ is the set of directed
edges. 
We also consider multisets $E$, where a directed edge $e\in E$ appears with
its own multiplicity $n_e\in\dN$. In this case we say that $G=(V,E)$ is a
\emph{multi digraph}.
Given a vertex $v$ of a multi digraph, the out-degree $\deg_+(v)$ is the
number of edges of the form $(v,j)\in E$, counting multiplicities. Similarly,
the in-degree $\deg_-(v)$ is the number of edges of the form $(j,v)\in E$,
counting multiplicities. Notice that each loop of the form $(v,v)$ is counted
once both in $ \deg_+(v)$ and $\deg_-(v)$. 
{\tiny
  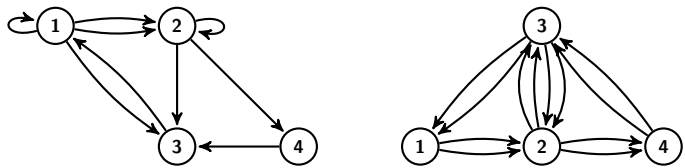
\begin{figure}[htbp]
    \begin{tikzpicture}[->,>=stealth',shorten >=.8pt,auto,node distance=1.6cm,
      thick,main node/.style={circle,draw,font=\sffamily
        \bfseries}]
      \node[main node] (1) {1};
      \node[main node] (2) [right of=1] {2};
      \node[main node] (3) [below of=2] {3};
      \node[main node] (4) [right of=3] {4};
      \node[main node] (5) [right of=4] {1};
      \node[main node] (6) [right of=5] {2};
      \node[main node] (7) [above of=6] {3};
      \node[main node] (8) [right of=6] {4};
      
      \path[every node/.style={font=\sffamily\small}]
      (1) 
      
      edge [bend right=10] node {} (2)
      edge [bend left=10] 
      node {} (2)
      edge [bend right=10] node {} (3)
      edge [loop left] node  {} (1)
      (2) 
      edge node {} (3)
      edge [loop right] node  {} (2)
      edge node {} (4)
      (3) 
      edge [bend right=10] node {} (1)
      (4) 
      edge node {} (3)
      ;

      \path[every node/.style={font=\sffamily\small}]
      (5) 
      
      edge [bend right=10] node {} (6)
      edge [bend left=10] node {} (6)
      
      (6) 
      edge [bend left=10] node {} (7)
      edge [bend left=30] node {} (7)
      
      edge [bend right=10] node {} (8)
      edge [bend left=10] node {} (8)
      (7) 
      edge [bend right=10] node {} (5)
      edge [bend left=10] node {} (5)
      
      edge [bend left=10] node {} (6)
      edge [bend left=30] node {} (6)
      (8) 
      edge [bend right=10] node {} (7)
      edge [bend left=10] node {} (7)
      
      ;
    \end{tikzpicture}
    \label{fig:fig1}
    \caption{Two examples of multi digraphs. In the first case $\deg_+(1)=4$
      and $\deg_-(1)=2$. The second example is an even digraph: it is
      generated by the even path $(1,2,3,2,4,3,1,2,3,2,4,3,1)$, and it can be
      decomposed into two double cycles, e.g.\ $(1,2,3,1,2,3,1)$ and
      $(2,4,3,2,4,3,2)$. }
  \end{figure}
}

Given natural $m$, a \emph{path} of length $m$ is a sequence
$(i_1,\dots,i_{m+1})\in[N]^{m+1}$. The path $P$ is \emph{closed} if the first
and the last vertex coincide. Each path $P=(i_1,\dots,i_{m+1})$ naturally
generates a multi digraph $G_P=(V,E)$, where $V=\{i_1,\dots,i_{m+1}\}$ and $E$
contains the edge $(i,j)$ with multiplicity $n$ if and only if the path $P$
contains exactly $n$ times the adjacent pair $(i,j)$. Notice that in general
there is more than one path generating the same multi digraph. If the path $P$
is closed, then $G_P$ is strongly connected, that is for any $u,v\in V$ one
can travel from $u$ to $v$ by following edges from $E$. A closed path without
repeated vertices except for the first and last vertices is called a {\em
  cycle}. A loop $(i,i)$ is considered a cycle of length $1$. A multi digraph
will be called a \emph{double cycle} if it is obtained by repeating two times a
given cycle. In particular, a double cycle is not allowed to have loops unless
its vertex set consists of just one vertex. We say that $P$ is an \textit{even
  path} if it is closed and every adjacent pair $(i,j)$ is repeated in $P$ an
even number of times. A multi digraph is called an \textit{even digraph} if it is
generated by an even path; see Figure \ref{fig:fig1} for an example. Thus, an
even digraph is always strongly connected. The following lemma can be proved
by adapting the classical theorems of Euler and Veblen.

\begin{lemma}\label{veblen}
  For a strongly connected multi digraph $G$, the following are equivalent:
  \begin{enumerate}[1)]
  \item $G$ is an even digraph;
  \item $\deg_+ (v) = \deg_-(v)$ is even for every vertex $v$; 
  \item $G$  can be partitioned into a collection of double cycles.
  \end{enumerate}
\end{lemma}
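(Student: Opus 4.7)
The plan is to prove the equivalence through a cyclic chain $(1) \Rightarrow (2) \Rightarrow (3) \Rightarrow (1)$, in each case adapting a classical theorem on Eulerian walks or cycle decompositions. For $(1) \Rightarrow (2)$, suppose $G$ is generated by an even path $P$. Closedness of $P$ forces the number of entries and exits at every vertex $v$ to coincide, so $\deg_+(v) = \deg_-(v)$. The defining even-repetition property gives that every edge multiplicity is even, so $\deg_+(v)$ is a sum of even numbers, hence even.

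For $(2) \Rightarrow (3)$, which is the main step, note that condition $(2)$, in the multi digraphs of interest here, comes with the information that every edge multiplicity $n_e$ is even; form the halved multi digraph $G'$ in which each multiplicity is $n_e/2$. Then $G'$ inherits the balance $\deg_+(v) = \deg_-(v)$, and I apply the directed version of Veblen's theorem: starting from any vertex with positive out-degree, follow outgoing edges (never getting stuck, thanks to degree balance) until some vertex is revisited; the portion of the walk between the two visits is a simple directed cycle, which can be peeled off while preserving degree balance. Iterating produces a partition of $G'$ into simple directed cycles, and doubling each cycle yields a partition of $G$ into double cycles.

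For $(3) \Rightarrow (1)$, a double-cycle decomposition immediately gives even multiplicities and $\deg_+(v) = \deg_-(v)$ at every vertex, which together with strong connectivity of $G$ is exactly the hypothesis of the directed Euler theorem, producing an Eulerian closed walk $P$ of $G$. Since $P$ traverses each edge exactly $n_e$ times and all $n_e$ are even, $P$ is an even path, and by construction $G_P = G$.

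The hardest part is $(2) \Rightarrow (3)$: the classical Veblen theorem only decomposes into simple cycles, not into cycles traversed twice. Halving the multiplicities is the cleanest way to reduce to the classical statement; it uses in an essential way that the underlying even-path structure forces multiplicities to be even.
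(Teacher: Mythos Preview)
The paper does not give a proof of this lemma; it only remarks that it ``can be proved by adapting the classical theorems of Euler and Veblen.'' Your cyclic chain and the halving device are precisely the kind of adaptation intended, and your arguments for $(1)\Rightarrow(2)$ and $(3)\Rightarrow(1)$ are correct.

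There is, however, a genuine gap in your $(2)\Rightarrow(3)$. You write that condition~(2) ``comes with the information that every edge multiplicity $n_e$ is even,'' but this is not a consequence of~(2) as stated. For a counterexample, take $V=\{1,2,3\}$ with each of the six directed edges $(i,j)$, $i\ne j$, present with multiplicity one. This multi digraph is strongly connected and has $\deg_+(v)=\deg_-(v)=2$ for every $v$, so~(2) holds; yet all multiplicities are odd, so no partition into double cycles exists and your halving construction cannot even begin. Hence $(2)\Rightarrow(3)$ is actually false as the lemma is literally written.

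This is a defect in the statement rather than in your strategy: if one augments~(2) by the clause ``and every edge multiplicity is even,'' your halving argument goes through verbatim. The paper only ever invokes the implications $(1)\Rightarrow(2)$ and $(1)\Rightarrow(3)$, both of which are fine, so the issue is harmless downstream. But your write-up should either flag the missing hypothesis explicitly, or bypass it by proving $(1)\Rightarrow(3)$ directly (halve the even digraph itself, whose multiplicities are even by definition, and then apply directed Veblen), rather than routing through an implication that does not hold.
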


\subsection{Equivalence classes and rooted digraphs}

Two multi digraphs $G=(V,E)$ and $G'=(V',E')$ are called \emph{isomorphic} if
there is a bijection $f:V\to V'$ such that $(i,j)\in E$ if and only if
$(f(i),f(j))\in E'$ and the multiplicities of the corresponding edges
coincide. The associated equivalence classes are regarded as unlabeled multi
digraphs. Given an unlabeled multi digraph $\cU$, we will write $G\sim \cU$
for any multi digraph $G$ belonging to the class $\cU$. An edge-rooted multi
digraph $G = (V,E,\rho)$, or simply a \emph{rooted digraph}, is defined as a
multi digraph with a distinguished directed edge $\rho \in E$. The definition
of equivalence classes is extended to rooted digraphs as follows. Two rooted
digraphs $G=(V,E,\rho)$ and $G'=(V',E',\rho')$ are called {isomorphic} if
there is a bijection $f:V\to V'$ such that $(i,j)\in E$ if and only if
$(f(i),f(j))\in E'$, multiplicities of corresponding edges coincide, and
$f(\rho) = \rho'$. With minor abuse of notation we will use the same
terminology as above, and write $G\sim \cU$ for rooted digraphs $G$ belonging
to the equivalence class $\cU$.


\subsection{Counting}

We turn to 
the problem of estimating the number of paths generating a given even digraph, and the number of even digraphs with a given number of edges. 
Lemma \ref{paths and graphs} and Lemma \ref{graphs counting} below are combinatorial statements that appear naturally in applications of the method of moments; see, e.g., \cite{sinaisoshnikov} for somewhat related estimates.   

Let $G=(V,E)$ be an even digraph with $|E|=2k$ edges. Unless otherwise
specified, multiplicities are always included in the edge count $|E|$. By
Lemma \ref{veblen} every vertex $v$ has even in- and out-degrees satisfying
\begin{equation}\label{eqdeg}
  \deg_+(v)=\deg_-(v). 
\end{equation}
Thus $G$ has at most $k$ vertices. Moreover, since the number of edges in $G$
is $2k$, we have
\begin{equation}\label{eqdeg2}
  \sum\limits_{v\in V}\deg_+ (v)=\sum\limits_{v\in V}\deg_- (v) = 2k.
\end{equation}

\begin{lemma}[Counting paths on digraphs]\label{paths and graphs}
  Let $G=(V,E)$ be an even digraph with $|E|=2k$ and $|V|=\ell$. The number of
  paths generating $G$ does not exceed
  \[\ell (4k-4\ell )!\]
\end{lemma}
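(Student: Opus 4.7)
The plan is to encode each path $P$ generating $G$ by the data of its starting vertex $i_1$ together with, for each vertex $v\in V$, the ordered list $L_v$ of out-neighbors that $P$ visits at its successive departures from $v$. This encoding is injective on paths: from $\bigl(i_1,(L_v)_{v\in V}\bigr)$ one reconstructs $P$ deterministically by starting at $i_1$ and iteratively reading the next vertex from the current vertex's list while advancing a pointer at that vertex. The starting vertex contributes at most $\ell$ choices, and $L_v$ is a length-$d_v$ sequence (with $d_v := \deg_+(v)$) in which each out-neighbor $w$ of $v$ must appear exactly $m_{(v,w)}$ times; hence the number of admissible $L_v$ is the multinomial coefficient $N_v := d_v!/\prod_w m_{(v,w)}!$. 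It follows that the number of paths generating $G$ is at most $\ell\prod_{v\in V}N_v$.

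Next I would prove the vertex-wise estimate $N_v \leq (2(d_v-2))!$. By Lemma~\ref{veblen} every $d_v$ is even with $d_v\geq 2$, and since $G$ is generated by an even path every edge multiplicity $m_{(v,w)}$ is even and hence also at least $2$. For $d_v=2$ this forces a single out-neighbor with multiplicity $2$, giving $N_v=1=0!=(2(d_v-2))!$. For $d_v\geq 4$, the crude bound $N_v\leq d_v!$ together with $d_v\leq 2d_v-4$ gives $N_v \leq d_v! \leq (2(d_v-2))!$.

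I would then combine via the standard inequality $\prod_v n_v!\leq (\sum_v n_v)!$ applied with $n_v=2(d_v-2)$. Using $\sum_{v\in V}d_v=2k$ and $|V|=\ell$, one obtains $\sum_v 2(d_v-2)=4k-4\ell$, hence $\prod_v N_v \leq (4k-4\ell)!$. Multiplying by the $\ell$ choices of starting vertex yields the claimed bound $\ell(4k-4\ell)!$. The main content of the argument is the encoding step of the first paragraph; once that is in hand, the $d_v=2$ case of the vertex-wise bound, which crucially uses the evenness of all edge multiplicities, is the only slightly delicate point, and everything else reduces to routine factorial inequalities.
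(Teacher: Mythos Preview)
Your proof is correct and follows essentially the same approach as the paper: encode a generating path by its starting vertex together with the ordered out-neighbor list at each vertex, use evenness of the edge multiplicities to see that degree-$2$ vertices contribute a factor~$1$, bound the remaining vertices crudely by $d_v!$, and then apply $\prod n_v!\leq(\sum n_v)!$. The only cosmetic difference is in the final arithmetic: you package each vertex's contribution as $(2(d_v-2))!$ and sum $\sum_v 2(d_v-2)=4k-4\ell$ directly, whereas the paper separates out the set $\{v:d_v\geq 4\}$ and bounds its total out-degree by $4k-4\ell$ via an auxiliary count of its cardinality.
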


\begin{proof}
  There are $\ell$ possibilities for the starting points of the path. The path
  is then characterized by the order in which neighboring vertices are
  visited. At each vertex $v$, there are $\deg_+(v)$ visits, and at most
  $\deg_+(v) /2$ out-neighbors. If $\deg_+(v) = 2$, there is only one possible
  choice for the next neighbor. If $\deg_+ (v) \geq 4$, then there are at most
  $\deg_+(v)!$ possible choices considering all visits to the vertex $v$.
  Hence, the number of paths generating $G$ is bounded by
  \[
  \textstyle{\ell\, \prod_{v : \,\deg_+ (v) \geq 4} ( \deg_+(v) !  ) %
    \leq \ell \left( \sum_{v : \,\deg_+ (v) \geq 4} \deg_+(v) \right) !}
  \]
  where we have used that the product of factorials does not exceed the
  factorial of the sum. Now, let $q$ be the number of vertices $v$ such that
  $\deg_+ (v) \geq 4$. From \eqref{eqdeg2}, we have
  \begin{equation}\label{eq:boundqo}
    \textstyle{\sum_{v : \, \deg_+ (v) \geq 4 } \deg_+(v)     +  2   ( \ell - q ) = 2 k. }
  \end{equation}
  Estimating the sum in \eqref{eq:boundqo} from below by $4q$ one has $ 4 q +
  2 ( \ell - q) \leq 2 k. $ Hence,
  \begin{equation}\label{eq:boundq}
    q \leq k - \ell.
  \end{equation} 
  Using \eqref{eq:boundq} in \eqref{eq:boundqo} one finds
  \begin{equation*}\label{eq:summj}
    \textstyle{
      \sum_{v : \,\deg_+ (v) \geq 4 } \deg_+(v)   \leq 4  k  -4 \ell. }\end{equation*}
\end{proof}

For integers $1 \leq \ell \leq \min\{ k, N\}$, let $\cG_N (k,\ell) $ be the
set of rooted even digraphs $G=(V,E)$ with $V\subset[N]$ such that $|V|=\ell$
and $|E|=2k$.
\begin{lemma}[Graph counting]\label{graphs counting}
  For any $k,N\in\dN$, $1\leq \ell\leq \min\{k,N\}$, the cardinality of $\cG_N (k,\ell) $ satisfies 
  \begin{equation}\label{canon}
    |\cG_N(k,\ell)| \leq N^\ell k ^{2 (k - \ell)+1}. 
  \end{equation}
\end{lemma}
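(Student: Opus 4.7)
\medskip

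\noindent\emph{Proof plan.} The plan is to encode each rooted even digraph in $\cG_N(k,\ell)$ by a pair consisting of an injective labeling $[\ell]\hookrightarrow[N]$ and a combinatorial ``shape'' on $[\ell]$, and bound the two components separately.

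\medskip

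\noindent\textbf{Step 1 (Reduction to shape counting).} For $(V,E,\rho)\in\cG_N(k,\ell)$, Lemma~\ref{veblen} implies that $G$ admits an Eulerian circuit; fix one such circuit $C=(i_1,\ldots,i_{2k+1})$ starting at the root, so that $(i_1,i_2)=\rho$ and $i_{2k+1}=i_1$. Enumerate the vertices of $V$ in order of first appearance in $C$: set $v_1=i_1$ and, for $r=2,\ldots,\ell$, let $v_r$ be the $r$-th distinct vertex visited. The tuple $(v_1,\ldots,v_\ell)$ is an injection $[\ell]\to[N]$, and there are at most $N(N-1)\cdots(N-\ell+1)\leq N^\ell$ such injections. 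Replacing each $i_t$ by its index produces a ``shape'' $\sigma\in[\ell]^{2k+1}$: a closed even path on $[\ell]$ starting at $1$ whose labels appear in the natural order. The pair (injection, shape) recovers $(V,E,\rho)$ uniquely, so it suffices to prove that the number $S(k,\ell)$ of shapes satisfies $S(k,\ell)\leq k^{2(k-\ell)+1}$.

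\medskip

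\noindent\textbf{Step 2 (Bounding $S(k,\ell)$).} Classify each of the $2k$ transitions of a shape into $\ell-1$ \emph{tree} transitions (destination is a new vertex, which by the first-appearance convention is forced to be the next unused integer) and $2k-\ell+1$ \emph{non-tree} transitions (destination lies in $\{1,\ldots,\ell\}$). A naive bound of $\ell$ choices per non-tree transition yields $\ell^{2k-\ell+1}$, which is much weaker than the target. The sharp bound comes from exploiting the even-path structure: every edge is traversed an even number of times, and the underlying multi-digraph decomposes into double cycles by Lemma~\ref{veblen}. This allows one to pair up non-tree transitions as ``openings'' and ``closings'' of cycles, so that many of them are determined by earlier data. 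Together with the degree estimate $\sum_{v:\deg_+(v)\geq 4}\deg_+(v)\leq 4(k-\ell)$ used in the proof of Lemma~\ref{paths and graphs}, one concludes that a shape is determined by at most $2(k-\ell)+1$ independent choices, each among at most $k$ options (drawn from the pool of distinct edges opened so far), giving $S(k,\ell)\leq k^{2(k-\ell)+1}$.

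\medskip

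\noindent\textbf{Main obstacle.} The hard part is making the pairing argument rigorous: identifying precisely which non-tree transitions are forced given the preceding data. I would carry this out by induction on $k-\ell$. The base case $k=\ell$ is immediate: the only even digraph with $\ell$ vertices and $2\ell$ edges is a doubled directed Hamiltonian cycle, so on a labeled vertex set of size $\ell$ the unique shape is $(1,2,\ldots,\ell,1,2,\ldots,\ell,1)$ and $S(\ell,\ell)=1\leq\ell$. For the inductive step, use Lemma~\ref{veblen} to peel off a double cycle of $G$ disjoint from the root (whose existence follows from $k>\ell$ and the excess-degree bound), reducing to a strictly smaller instance; the re-insertion contributes a factor of at most $k$ per peeling (number of attachment sites times number of cycle lengths), which telescopes to $k^{2(k-\ell)+1}$. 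Combining with Step 1 gives $|\cG_N(k,\ell)|\leq N^\ell k^{2(k-\ell)+1}$, as desired.
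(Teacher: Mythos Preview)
Your Step~1 reduction is sound: encoding by (injection, canonical Eulerian shape) is injective, so $|\cG_N(k,\ell)|\leq N^\ell\cdot S(k,\ell)$ is valid. The difficulty is entirely in Step~2, and there the argument does not go through.

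First, the quantitative bookkeeping of the induction is off. Peeling a double cycle of length $m$ that shares $r$ vertices with the remainder reduces $k-\ell$ by exactly $r$, so to interpolate between the target bounds $k^{2(k-\ell)+1}$ and $(k')^{2(k'-\ell')+1}$ you need each peeling to cost a factor of roughly $k^{2r}$, not ``at most $k$''. Conversely, when you re-insert such a cycle you must record (i) which $r$ vertices of the smaller graph are hit, (ii) where they sit along the cycle, and (iii) the length $m$; even crudely this is $\ell^{r}\cdot\binom{m}{r}\cdot k$ choices, and summing over a decomposition with $q$ pieces yields a bound of order $k^{2(k-\ell)+q}$ with $q$ as large as $k-\ell$. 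So the telescoping you describe does not produce $k^{2(k-\ell)+1}$. Second, there is a category mismatch: $S(k,\ell)$ counts \emph{shapes} (specific Eulerian circuits), whereas your induction peels cycles from \emph{digraphs}. You never explain how removing a double cycle from $G$ corresponds to an operation on the Eulerian word $\sigma$, nor how to rebuild $\sigma$ from the smaller shape together with the peeled cycle; this is where the real combinatorics lives, and it is not addressed. Finally, ``peel off a double cycle disjoint from the root whose removal keeps the rest connected'' requires an ear-decomposition argument, not just Lemma~\ref{veblen}.

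The paper's proof avoids all of this by never passing through paths at all. It counts rooted even digraphs directly as (choice of $\ell$ vertices) $\times$ (choice of degree sequence $m(1),\dots,m(\ell)$ with $\sum m(j)=k$) $\times$ (bipartite matching of $k$ heads to $k$ tails in a configuration model) $\times$ (choice of root), and then uses the elementary inequalities $\binom{k-1}{\ell-1}\leq k^{k-\ell}$ and $k!/\ell!\leq k^{k-\ell}$ to land exactly on $N^\ell k^{2(k-\ell)+1}$. This is both shorter and tighter than the path-encoding route; if you want to salvage your approach, you would at minimum need to replace the vague pairing heuristic by an explicit encoding of the shape with $2(k-\ell)+O(1)$ free symbols in $[k]$, which essentially reproduces the configuration-model count in disguise.
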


\begin{proof}
  We first choose $\ell$ vertices among $N$. There are
  \[
  \binom{N}{\ell}\leq\frac{N^\ell}{\ell!}
  \] 
  choices. Without loss of generality we assume that the set of vertices is
  given by $\{ 1, \ldots , \ell \}$. Next, we assign an admissible degree to
  each vertex of $\{ 1, \ldots , \ell \}$. Let $m(j)\in\dN$ be defined as
  $m(j)=\deg_\pm(j)/2$. In view of \eqref{eqdeg} and \eqref{eqdeg2}, one has
  $m(j)\geq 1$ and $\sum_{j=1}^\ell m(j)=k$. Thus there are
  \[
  \binom{k-1}{\ell-1}\leq k^{k-\ell}
  \] 
  choices for the vector $(m(1),\dots,m(\ell))$. Next, we need to count the
  number of multi digraphs with the given degree sequence. To this end, we may
  use the configuration model. Namely, we think of every vertex $j$ as having
  $m(j)$ heads and $m(j)$ tails. Altogether, there will be $k$ heads and $k$
  tails. Each head is thought of as a pair of loose out-edges (without an
  assigned out-neighbor) while each tail is thought of as a pair of loose
  in-edges (without an assigned in-neighbor). The number of multi digraphs
  with the given degree sequence is bounded by the number of bipartite
  matchings of heads and tails, which gives $k!$ possible choices. Thus, using
  $k!/\ell!\leq k^{k-\ell}$, we see that the total number of even multi
  digraphs with $\ell$ vertices and $2k$ edges is bounded above by
  \[
  N^\ell k^{2(k-\ell)}. 
  \]
  It remains to choose the root edge. Since there are at most $k$ choices, the
  proof is complete.
\end{proof}

\section{Statistics of even digraphs}

Every edge $(i,j)\in[N]\times[N]$ is given the random weight $X_{i,j}$, where
$X_{i,j}$ are independent copies of a random variable $\bx$ satisfying the
assumptions of Theorem \ref{main}. The weight of an even digraph $G=(V,E)$, is
defined as
\begin{gather}\label{pg}
  p(G) :=  \prod_{(i,j) \in E} |X_{i,j} |^{ n_{i,j} }, 
\end{gather}
where each edge $(i,j)\in E$ has multiplicity $n_{i,j}\geq 2$. Note that in
this formula we interpret ``$(i,j)\in E$'' without taking into account the
multiplicity in the multiset $E$. Given an unlabeled even graph $\cU$,
consider the equivalence class of even digraphs $\{G:\,G\sim \cU\}$. We are
interested in estimating
\begin{gather}\label{statistico}
  \cS_h(\cU):=\frac{2^h|\{G\sim \cU:\, p(G)\geq 2^h\}|}{|\{G:\,G\sim \cU\}|},
\end{gather}
for $h=0,1,2,\dots$ Moreover, we define
\begin{gather}\label{statistics}
  \cS(\cU):=\max(1,\max_{h\in\{0,1,2,\dots\}}\cS_h(\cU))\,.
\end{gather}
We refer to $S(\cU)$ as the \textit{statistics} of the unlabeled even digraph $\cU$.

We extend the above definitions to rooted even digraphs as follows. The weight
of a rooted even digraph $G = (V,E,\rho)$ is defined by
\begin{gather}\label{prg}
  p_r(G)=  \prod_{(i,j) \in E} |X_{i,j} |^{ n_{i,j} -2\mathbf{1}_{(i,j)=\rho}}.
\end{gather}
Note that 
\[
p_r(V,E,\rho) = |X_\rho|^{-2}p(V,E),
\] 
is well defined even if $X_\rho=0$ since the root edge $\rho$ satisfies $\rho\in E$
and thus $n_\rho\geq 2$. If $\cU$ is an unlabeled rooted even digraph, that is
an equivalence class of rooted even digraphs, then $\cS_h(\cU)$ and $\cS(\cU)$
are defined as in \eqref{statistico} and \eqref{statistics}, provided $p(G)$
is replaced by $p_r(G)$ in that expression. 

The special notion \eqref{prg} of rooted graph weights will be needed to handle the weight of closed paths $P$ that are obtained by artificially adding a distinguished edge; see \eqref{prwp} below. 

Estimates for the statistics $\cS(\cU)$ will be derived from a basic estimate
for double cycles. Let $\cC_m$ be the unlabeled double cycle with $2m$ edges.
Similarly, $\cC^\star_m$ will denote the unlabeled rooted double cycle with
$2m$ edges. From the assumptions of Theorem \ref{main}, for any double cycle
$C\sim \cC_m$ 
we have 
\begin{equation}\label{eqb1} 
  \mathbb{E}[ p(C)]\leq1\,,\quad \mathbb{E}[p(C)^{1+\varepsilon/2}]\leq B^m. 
\end{equation} 
Note that the same bounds apply for any rooted double cycle $C \sim
\cC_m^\star$, with the weights $p(C)$ replaced by $p_r(C)$.

\begin{lemma}[Cycle statistics]\label{cycle stats}
  For any $k\geq 1$,  
  define the event
  \[
  \cA_{k}:= \cA^1_{k}\cap\cA^2_{k}\cap\cA^3_{k}
  \]
  where
  \begin{align*}
    \cA^1_{k}&
    :=\bigcap_{m=1}^k\bigg\{\sum\limits_{h=0}^\infty\cS_h(\cC_m)\leq k^2\bigg\},\\
    \cA^2_{k}&
    :=\bigcap_{m=1}^k\bigg\{ \sum\limits_{h=0}^\infty\cS_h(\cC^\star_m)\leq k^2\bigg\},\\
    \cA^3_{k}&
    :=\bigcap_{m=1}^k\bigg\{\sum\limits_{h=0}^\infty 2^{h\varepsilon/2}\cS_h(\cC_m)\leq k^2 B^m\bigg\}.
  \end{align*}
  Then 
  \[
  \dP(\cA_{k})\geq 1-\frac{6}{k}.
  \]
\end{lemma}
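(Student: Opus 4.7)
The plan is to apply Markov's inequality after a dyadic layer-cake rewriting, with~\eqref{eqb1} controlling the relevant expectations.

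The key numerical observation is that for every $y\geq 0$ and every $\alpha\geq 1$, a geometric series gives
\[
\sum_{h\geq 0} 2^{h\alpha}\,\mathbf{1}_{\{y\geq 2^h\}} \leq \frac{2^{\alpha}}{2^{\alpha}-1}\,y^{\alpha} \leq 2\,y^{\alpha},
\]
where the second inequality uses $2^{\alpha}\geq 2$. Applying this with $\alpha=1$ and $y=p(G)$, and then averaging over $G\sim\cC_m$, I would obtain
\[
\sum_{h\geq 0}\cS_h(\cC_m) \leq \frac{2}{|\{G:G\sim\cC_m\}|}\sum_{G\sim\cC_m} p(G).
\]
Taking expectations and using the first bound of~\eqref{eqb1} gives $\dE\big[\sum_h \cS_h(\cC_m)\big]\leq 2$ for every $1\leq m\leq k$. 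Markov's inequality then yields $\dP\big(\sum_h \cS_h(\cC_m)>k^2\big)\leq 2/k^2$, and a union bound over $m=1,\dots,k$ provides $\dP((\cA_k^1)^c)\leq 2/k$.

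The event $\cA_k^2$ is handled verbatim after replacing $p$ by $p_r$ and $\cC_m$ by $\cC^\star_m$: as pointed out just before the statement, the analogues of~\eqref{eqb1} for rooted double cycles hold with the same constants, so the same computation gives $\dP((\cA_k^2)^c)\leq 2/k$.

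For $\cA_k^3$, I would apply the same layer-cake with $\alpha=1+\veps/2$. Writing $2^{h\veps/2}\cS_h(\cC_m)=2^{h(1+\veps/2)}|\{G\sim\cC_m:\,p(G)\geq 2^h\}|/|\{G:G\sim\cC_m\}|$, the identical rearrangement yields
\[
\sum_{h\geq 0} 2^{h\veps/2}\cS_h(\cC_m) \leq \frac{2}{|\{G:G\sim\cC_m\}|}\sum_{G\sim\cC_m} p(G)^{1+\veps/2}.
\]
Taking expectations and invoking the second bound of~\eqref{eqb1}, the right-hand side has mean at most $2B^m$; Markov with threshold $k^2 B^m$ then gives probability at most $2/k^2$, and a union bound over $m$ delivers $\dP((\cA_k^3)^c)\leq 2/k$. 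Summing the three contributions yields $\dP(\cA_k^c)\leq 6/k$, as required.

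There is no genuine obstacle here beyond careful bookkeeping: the exponent $1+\veps/2$ appearing in $\cA_k^3$ is matched exactly to the available $(2+\veps)$-moment of $\bx$, the dyadic threshold $2^h$ in the definition of $\cS_h$ is precisely what allows a single geometric series to absorb the weight, and~\eqref{eqb1} itself is immediate from independence since every edge of a double cycle appears with multiplicity exactly two and the weight factorizes as $\prod_{e}|X_e|^2$.
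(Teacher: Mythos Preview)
Your argument is correct and follows essentially the same route as the paper: a dyadic layer-cake bound $\sum_{h\geq 0}2^{h\alpha}\mathbf{1}_{\{y\geq 2^h\}}\leq 2y^\alpha$ (the paper records this as \eqref{asz} for $\alpha=1$ and reuses it for $\alpha=1+\veps/2$), expectation via \eqref{eqb1}, then Markov and a union bound over $m$. The only cosmetic difference is that the paper fixes a single representative $C\sim\cC_m$ and uses the symmetry identity $2^h\dP(p(C)\geq 2^h)=\dE[\cS_h(\cC_m)]$, whereas you average over all $G\sim\cC_m$ before taking expectations; these are of course equivalent.
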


\begin{proof}
  For any $a\geq 0$ one has
  \begin{equation}\label{asz}
    \frac12\sum\limits_{h=0}^\infty 2^h\mathbf{1}_{a\geq 2^h} %
    \leq a \leq 1 + 2\sum\limits_{h=0}^\infty 2^h\mathbf{1}_{a\geq 2^h}.
  \end{equation}
  Take any $C\sim\cC_m$. The first inequality in \eqref{asz} yields
  \begin{equation}\label{cycles0}
    \frac12\sum\limits_{h=0}^\infty 2^h\mathbf{1}_{p(C)\geq 2^h}\leq p(C).
  \end{equation}
  Taking the expectation,  \eqref{eqb1} implies
  \[
  \sum\limits_{h=0}^\infty 2^h\dP(p(C)\geq 2^h)\leq 2.
  \]
  On the other hand, by symmetry any $C\sim\cC_m$ satisfies
  \begin{equation}\label{cycles10}
    2^h\dP(p(C)\geq 2^h) = \dE[\cS_h(\cC_m)].
  \end{equation}
  Hence, from Markov's inequality and a union bound over $1 \leq m\leq k$, one has
  \begin{equation}\label{cycles1}
    \dP(\cA^1_{k})\geq 1-\frac{2}{k}. 
  \end{equation}
  for all $m\leq k$. 
  Next, as in \eqref{cycles0} one shows that
  \begin{equation*}\label{cycles00}
    p(C)^{1+\veps/2}\geq 
    \frac12\sum\limits_{h=0}^\infty 2^{h(1+\veps/2)}\mathbf{1}_{p(C)\geq 2^h}.
  \end{equation*}
  Then \eqref{eqb1} and \eqref{cycles10} imply
  \[
  \sum\limits_{h=0}^\infty2^{h\varepsilon/2}  \dE[\cS_h(\cC_m)]%
  =\sum\limits_{h=0}^\infty2^{h+h\varepsilon/2}\dP(p(C)\geq 2^h)%
  \leq 2\, \dE \left[ p(C)^{1+\veps/2}\right]%
  \leq 2B^m.
  \]
  Therefore, from Markov's inequality and a union bound over $1 \leq m\leq k$,
  \begin{equation}\label{cycles11}
    \dP(\cA^3_{k})\geq 1-\frac{2}{k}. 
  \end{equation}
  Finally, we observe that the same argument leading to \eqref{cycles1} can be
  repeated for rooted cycles, with no modifications. It follows that
  \begin{equation}\label{cycles011}
    \dP(\cA^2_{k})\geq 1-\frac{2}{k}. 
  \end{equation}
  From \eqref{cycles1}-\eqref{cycles011} and the union bound over $i=1,2,3$,
  it follows that 
  \[
  \dP(\cA_{k})\geq 1-\frac{6}{k}.
  \]
\end{proof}
To make the link with the arguments presented in the introduction, we remark that if $\nu_m$ denotes the uniform distribution over the set of all $C\sim \cC_m$, then \eqref{asz} allows one to interpret the events $\cA^1_k$ and $\cA^3_k$ as the condition discussed in \eqref{mainid}.  

In the remainder of this section, on the event $\cA_k$, we will
deterministically upper bound the statistics of any unlabeled rooted even
digraph; see Proposition \ref{exp prop} below. The proof will use the
following induction statement.

\begin{lemma}[Induction]\label{induction}
  Fix integers $1\leq r\leq m\leq k\ll \sqrt N$. Let $\cU'$ be an unlabeled
  rooted even digraph with at most $k$ vertices and assume that $\;\cU'$ can
  be decomposed as $\;\cU'=\cU\cup \cC_m$ for some unlabeled rooted even
  digraph $\cU$ and a double cycle $\cC_m$ of length $2m$ having $r$ common
  vertices with $\cU$. Suppose that $\cA_k$ holds. Then
  \begin{enumerate}[1)]
  \item $\mathcal{S}(\cU')\leq 3ek^2 N^r \mathcal{S}(\cU)$;
  \item If $m\log B\leq\frac{\varepsilon}{4}r\log N$, then
    $\mathcal{S}(\cU')\leq 5e k^2N^{r(1-\varepsilon/8)} \mathcal{S}(\cU)$.
  \end{enumerate}
\end{lemma}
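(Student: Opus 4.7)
The plan is to exploit the multiplicativity of the rooted weight under the decomposition $\cU'=\cU\cup\cC_m$. Any representative $G'\sim\cU'$ decomposes canonically as $G'=G\cup C$ with $G\sim\cU$ and $C\sim\cC_m$ sharing exactly $r$ vertices. Because the root edge lies in $\cU$ and edge multiplicities add under graph union, one has the key identity $p_r(G')=p_r(G)\,p(C)$. A standard labeling argument, which chooses the $m-r$ new cycle vertices in $[N]\setminus V(G)$ and accounts for the cyclic automorphisms of $\cC_m$, yields the two combinatorial estimates
\[
|\{C\sim\cC_m: G\cup C\sim\cU'\}|\leq c_0 N^{m-r},\qquad \frac{|\{G\sim\cU\}|\cdot|\{C\sim\cC_m\}|}{|\{G'\sim\cU'\}|}\leq c_0 N^r,
\]
for an absolute constant $c_0$; the second follows from the first together with $|\{C\sim\cC_m\}|\asymp N^m/m$ and the automorphism bound $|\mathrm{Aut}(\cU')|/|\mathrm{Aut}(\cU)|\leq m$, using the hypothesis $k\ll\sqrt N$ to linearize $(N)_\ell$.

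I would then set up a unified dyadic estimate. Writing $\mathbf{1}\{p_r(G)p(C)\geq 2^h\}\leq\sum_{h_2\geq 0}\mathbf{1}\{p(C)\geq 2^{h_2}\}\mathbf{1}\{p_r(G)\geq 2^{h-h_2-1}\}$ (the boundary contributions $p_r(G)<1$ or $p(C)<1$ being absorbed into the $\max(1,\cdot)$ in the definition of $\cS$), and introducing a Hölder-type interpolation parameter $\theta\in[0,1]$ via
\[
|\{C\in\mathrm{Ext}(G):p(C)\geq 2^{h_2}\}|\leq\min\!\bigl(c_0 N^{m-r},\,|\{C\sim\cC_m:p(C)\geq 2^{h_2}\}|\bigr)\leq(c_0 N^{m-r})^{1-\theta}\,|\{C\sim\cC_m:p(C)\geq 2^{h_2}\}|^\theta,
\]
I would sum over $G$ and $C\in\mathrm{Ext}(G)$, multiply by $2^h$, divide by $|\{G'\sim\cU'\}|$, and use the bookkeeping $N^{(m-r)(1-\theta)}\cdot|\{C\}|^\theta\cdot|\{G\}|/|\{G'\}|\asymp N^{r\theta}$ to obtain
\[
\cS_h(\cU')\leq c_1\,N^{r\theta}\,\cS(\cU)\sum_{h_2\geq 0}2^{h_2(1-\theta)}\,\cS_{h_2}(\cC_m)^\theta.
\]

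Part~1) follows at $\theta=1$: the $N$-exponent collapses to $r$ and $\sum_{h_2}\cS_{h_2}(\cC_m)\leq k^2$ by $\cA_k^1$, yielding $\cS(\cU')\leq 3ek^2 N^r\cS(\cU)$ after tuning constants. Part~2) follows at $\theta=(1-\varepsilon/8)/(1+\varepsilon/4)$, which satisfies $\theta\in(1/(1+\varepsilon/2),1)$ for every $\varepsilon\in(0,2)$. Using the term-by-term bound $\cS_{h_2}(\cC_m)\leq k^2 B^m\,2^{-h_2\varepsilon/2}$ extracted from $\cA_k^3$, the sum becomes $\sum_{h_2}2^{h_2(1-\theta(1+\varepsilon/2))}(k^2 B^m)^\theta\leq c_2(k^2 B^m)^\theta$ since the geometric exponent is strictly negative; the hypothesis $m\log B\leq(\varepsilon/4)r\log N$ bounds $B^{m\theta}\leq N^{r\varepsilon\theta/4}$, upgrading the overall $N$-exponent to $r\theta(1+\varepsilon/4)=r(1-\varepsilon/8)$ by the choice of $\theta$. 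This yields $\cS(\cU')\leq 5ek^2 N^{r(1-\varepsilon/8)}\cS(\cU)$. The main obstacle is the combinatorial accounting underlying the ratio $|\{G\}||\{C\}|/|\{G'\}|\leq c_0 N^r$: one must faithfully track how the cyclic symmetries of $\cC_m$ are broken by the $r$ shared vertices inside $\cU'$, or else constants proliferate. Once this framework is in place, the Hölder interpolation with $\theta$ is elementary, but the window $\theta\in(1/(1+\varepsilon/2),1)$ is narrow and must be navigated with precisely the above choice to balance summability against the target exponent $r(1-\varepsilon/8)$.
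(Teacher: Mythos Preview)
Your approach is essentially correct, and for part~2) it is genuinely different from the paper's.

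Both arguments start from the factorization $p_r(G')=p_r(G)\,p(C)$ and a dyadic decomposition of $\{p_r(G)p(C)\geq 2^h\}$. For the combinatorics, however, the paper does \emph{not} count automorphisms: it introduces a uniform random permutation $\pi$ of $[N]$, writes $\cS_h(\cU')=2^h\,\dP_\pi(p_r(\pi[G'])\geq 2^h)$, and conditions on the restriction $R=\pi|_{V(G)}$. This yields in one line the key estimate $\dP_\pi(p(\pi[C])\geq\tau\mid R)\leq eN^r\,\dP_\pi(p(\pi[C])\geq\tau)$, because the conditional sample space for $\pi[C]$ has size at least $(N-k)^{m-r}$ while the unconditional one has size at most $N^m$. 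Your direct counting is equivalent in spirit, but the ratio $|\{G\}|\,|\{C\}|/|\{G'\}|$ hides the factor $|\mathrm{Aut}(\cU')|/|\mathrm{Aut}(\cU)|$, and your sketch (``$|\mathrm{Aut}(\cU')|/|\mathrm{Aut}(\cU)|\leq m$'') is not justified --- an automorphism of $\cU'$ fixing the root need not preserve the particular decomposition $\cU\cup\cC_m$. The permutation device sidesteps this entirely and is worth adopting.

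For part~2), the paper does \emph{not} interpolate. It splits the dyadic sum at the threshold $T=\lceil\log_2 N^{r(1-\varepsilon/8)}\rceil$: for $\ell\leq T$ it discards the cycle statistics altogether, using only $\dP_\pi(p_r(\pi[G])\geq 2^{h-\ell})\leq 2^{\ell-h}\cS(\cU)$ and summing $2^\ell$ up to $2^T$; for $\ell>T$ it keeps the $N^r$-bound from part~1) but controls the tail via $\sum_{\ell\geq T}\cS_\ell(\cC_m)\leq 2^{-\varepsilon T/2}k^2B^m\leq k^2N^{-\varepsilon r/8}$ from $\cA_k^3$ and the hypothesis $B^m\leq N^{\varepsilon r/4}$. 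This threshold argument produces the stated absolute constant $5e$. Your H\"older route with $\theta=(1-\varepsilon/8)/(1+\varepsilon/4)$ is slicker and lands on the same exponent $r(1-\varepsilon/8)$, but the geometric series $\sum_{h_2}2^{h_2(1-\theta(1+\varepsilon/2))}$ has ratio $2^{-c\varepsilon}$ and therefore sums to a constant of order $1/\varepsilon$; so your prefactor depends on $\varepsilon$ rather than being $5e$. This is harmless downstream (in Proposition~\ref{exp prop} the constant is absorbed into $N^{\varepsilon/16}$), but it does not reproduce the lemma exactly as stated.
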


\begin{proof}
  Fix an even rooted digraph $G'\sim \cU'$ and denote by $C\sim \cC_m$ and
  $G\sim \cU$, respectively, the double cycle with $2m$ edges and the even
  rooted digraph isomorphic to $\cU$ so that $G'=G\cup C$. Further, let $\pi$
  be a uniform random permutation of $[N]$, which we assume to be defined on a
  different probability space. Any permutation induces a mapping on rooted
  digraphs via vertex relabeling, so that the rooted digraph $\pi[G']$ is
  uniformly distributed on the set $\{H:\,H\sim \cU'\}$. Hence we may write
  \begin{gather}\label{id1}
    \cS_h(\cU')=2^h\dP_\pi(p_r(\pi[G'])\geq 2^h),\;\;h=0,1,\dots
  \end{gather}
  where $\dP_\pi$ denotes the probability w.r.t.\ the random permutation
  $\pi$. For any $a,b\geq 0$,
  \begin{align*}
    \mathbf{1}_{ab\geq 2^h}
    & = \mathbf{1}_{ab\geq 2^h}\bigg(\sum_{\ell=1}^h\mathbf{1}_{2^{\ell-1}\leq a<2^\ell} %
    +\mathbf{1}_{a<1} +\mathbf{1}_{a\geq 2^h}\bigg)\\ 
    & \leq\sum_{\ell=1}^h\mathbf{1}_{b\geq 2^{h-\ell}; \, a\geq 2^{\ell-1}} %
    + \mathbf{1}_{b\geq 2^h} +  \mathbf{1}_{a\geq 2^h}.
  \end{align*}
  Using this and $p_r(\pi[G'])=p_r(\pi[G])\,p(\pi[C])$, one may estimate
  \begin{align}\label{estima1}
    \dP_\pi\left(p_r(\pi[G'])\geq 2^h\right)
    &\leq\sum\limits_{\ell=1}^h\dP_\pi\left(p_r(\pi[G])\geq2^{h-\ell};\,p(\pi[C])\geq2^{\ell-1}\right)\nonumber\\ 
    &\qquad +\,\dP_\pi\left(p(\pi[C])\geq 2^{h}\right)+\dP_\pi\left(p_r(\pi[G])\geq 2^{h}\right).
  \end{align}
  Let us condition on a fixed realization $R$ of $\pi$ restricted to the
  vertices $V$ of $G$. Thus, $\dP_\pi(\cdot\,|\,R)$ represents a uniform
  average over all permutations that agree with the given $R$ on $V$. We write
  $C'\sim(C;R)$ for any digraph $C'$ that has the form $C'=\pi[C]$ for some
  $\pi$ that agrees with $R$ on $V$. Since $C$ has $m-r$ free vertices (those
  which do not fall into $V$), and we can pick them among $N-|V|$ available
  vertices, the cardinality of $\{C'\sim(C;R)\}$ is at least
  \[
  (N-|V|)(N-|V|-1)\cdots(N-|V|-(m-r-1)) \geq (N-k)^{(m-r)},
  \] 
  where we use that the total number of vertices satisfies $|V|+(m-r) \leq k$.
  %
  Since the number of double cycles of length $2m$ is $\binom{N}{m}(m-1)!\leq N^m$, we can 
  write for any $\tau>0$:
  \begin{align*}\dP_\pi(
    p(\pi[C])\geq \tau\,|\,R)& =
    \frac{|\{C'\sim(C;R):\,p(C')\geq \tau\}|}{|\{C'\sim(C;R)\}|}\\
    & \leq 
    (N-k)^{r-m}|\{C'\sim\cC_m:\,p(C')\geq \tau\}| \\&\leq 
    (N-k)^{r-m}N^m\dP_\pi(p(\pi[C])\geq \tau)
    \leq eN^r\dP_\pi(p(\pi[C])\geq \tau),\end{align*}
  where we use $ r\leq m\leq k\ll \sqrt N$ to bound $(1-\tfrac{k}N)^{r-m}\leq e$. 
  Since the above estimate is uniform over the realization $R$, for any $\ell=1,2,\dots,h$ we have
  \begin{align*}
    &\dP_\pi\left(p_r(\pi[G])\geq 2^{h-\ell};\;p(\pi[C])\geq 2^{\ell-1}\right)\\
    &\qquad \leq \dP_\pi\left(%
      p_r(\pi[G])\geq 2^{h-\ell}\right)\,\sup\limits_R\,\dP_\pi\left(\pi[C]\geq 2^{\ell -1}\,|\,R\right)
    \\
    &\qquad 
    \leq eN^r\dP_\pi\left(p_r(\pi[G])\geq 2^{h-\ell}\right)\dP_\pi\left(p(\pi[C])\geq 2^{\ell-1}\right).
  \end{align*}
  Using the definition of $\cS(\cU)$ and the identity \eqref{id1} applied to
  $G$ and $C$ we obtain, for all $\ell=1,\dots,h$:
  \begin{align}\label{esta1}
    \dP_\pi\left(p_r(\pi[G])\geq 2^{h-\ell};\;p(\pi[C])\geq 2^{\ell-1}\right)
    \leq eN^r2^{1-h}\cS(\cU)\cS_{\ell-1}(\cC_m).
  \end{align}
  From \eqref{estima1} one has
  \begin{align*}
    \dP_\pi(p(\pi[G'])\geq 2^h)&\leq
    eN^r2^{1-h}\cS(\cU)\sum\limits_{\ell=0}^{h-1}\cS_{\ell}(\cC_m)+
    2^{-h}\cS_h(\cC_m) + 2^{-h}\cS(\cU).
  \end{align*}
  Since $\cS(\cU)\geq 1$, on the event $\cA_k$ of Lemma \ref{cycle stats} one can estimate
  \[
  2^h\dP_\pi(p(\pi[G'])\geq 2^h)\leq
  2eN^r\cS(\cU)\sum\limits_{\ell=0}^{\infty} \cS_{\ell}(\cC_m) +\cS(\cU) \leq
  3ek^2N^r\cS(\cU).
  \] 
  Taking the supremum over $h$, the above relation proves the first assertion of the lemma.
  
  Let us prove the second assertion. On the event $\cA_k$ of Lemma \ref{cycle stats}, for any $T\in\dN$,
  \[
  \sum_{\ell=T}^\infty\cS_{\ell}(\cC_m)\leq 2^{-\veps T/2} k^2B^m.
  \]
  Fix $T= \lceil\log_2(N^{r(1-\varepsilon/8)})\rceil$. If $m\log B\leq\frac{\varepsilon}{4}r\log N$, then
  \[
  \sum_{\ell=T}^\infty\cS_{\ell}(\cC_m) \leq k^2N^{-\varepsilon r/8}.
  \]
  Estimating as in \eqref{esta1} for all $\ell\geq T+1$, we obtain
  \[
  \sum_{\ell=T+1}^h\dP_\pi\left(p_r(\pi[G])\geq 2^{h-\ell};\;p(\pi[C])\geq 2^{\ell-1}\right)%
  \leq 2^{-h+1}ek^2\cS(\cU)N^{r(1-\varepsilon/8)}.
  \]
  On the other hand, using $\dP_\pi\left(p_r(\pi[G])\geq 2^{h-\ell}\right)\leq 2^{-h+\ell}\cS(\cU)$, we find
  \[
  \sum_{\ell=1}^T\dP_\pi(p_r(\pi[G])\geq 2^{h-\ell};\;p(\pi[C])\geq 2^{\ell-1})%
  \leq 2^{-h}\cS(\cU)2^{T+1}%
  \leq 2^{-h+2}\cS(\cU)N^{r(1-\varepsilon/8)}.
  \]
  From \eqref{estima1} it follows that
  \begin{align*}
    \dP_\pi(p(\pi[G'])\geq 2^h)%
    &\leq 2^{-h+2}ek^2\cS(\cU)N^{r(1-\varepsilon/8)} +2^{-h}\cS_h(\cC_m)+2^{-h}\cS(\cU).
  \end{align*}
  On the event $\cA_k$ one has $\cS_h(\cC_m)\leq k^2\leq k^2\cS(\cU)$, and therefore
  \begin{align*}
    2^h\dP_\pi(p(\pi[G'])\geq 2^h)&\leq 
    5ek^2 N^{r(1-\varepsilon/8)} \mathcal{S}(\cU).
  \end{align*}
  Taking the supremum over $h$, we obtain the second assertion of the lemma.
\end{proof}

We turn to the main statement of this section

\begin{proposition}[Main estimate]\label{exp prop}
  Suppose $N^{\varepsilon/16}\geq5ek^2$, and let $\cU$ be an unlabeled rooted
  even graph with $2k$ edges and $x$ vertices. Define
  \[
  y_x:=\max\left(0,k-x-\frac{4k\log B}{\varepsilon\log N}\right).
  \]
  Then, on the event $\cA_k$ we have
  \[
  \mathcal{S}(\cU)\leq N^{k-x}N^{-\varepsilon y_x/16}k^2\bigl(3ek^2\bigr)^{\frac{4k\log B}{\varepsilon\log N}}.
  \]
\end{proposition}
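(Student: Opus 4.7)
By Lemma \ref{veblen}(3), the unlabeled rooted even digraph $\cU$ admits a decomposition into double cycles $\cC_{m_1},\dots,\cC_{m_s}$ with $\sum_{i=1}^s m_i=k$. Since $\cU$ is strongly connected, I would order the pieces so that $\cC_{m_1}$ contains the root edge $\rho$ and, for each $i\geq 2$, the partial union $\cU_{i-1}:=\cC_{m_1}\cup\cdots\cup\cC_{m_{i-1}}$ shares at least one vertex with $\cC_{m_i}$; this is a standard BFS-style ordering available as soon as the ambient multi digraph is connected. Denote by $r_i\geq 1$ the number of shared vertices at step $i$. Counting vertices yields $x=m_1+\sum_{i\geq 2}(m_i-r_i)$, i.e.\ $R:=\sum_{i\geq 2}r_i=k-x$. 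The base case is the rooted double cycle $\cU_1=\cC^\star_{m_1}$, whose statistic satisfies $\cS(\cC^\star_{m_1})\leq k^2$ on the event $\cA_k\subset\cA^2_k$.

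At each subsequent step $i\geq 2$, I apply Lemma \ref{induction} with $\cU=\cU_{i-1}$, $\cC_m=\cC_{m_i}$, $r=r_i$. Split $\{2,\dots,s\}$ into \emph{good} indices $I_g$ (where $m_i\log B\leq \tfrac{\varepsilon}{4}r_i\log N$, so assertion~(2) applies, giving a multiplicative factor $5ek^2 N^{r_i(1-\varepsilon/8)}$) and \emph{bad} indices $I_b$ (where only assertion~(1) applies, giving $3ek^2 N^{r_i}$). For $i\in I_b$ one has $r_i<4m_i\log B/(\varepsilon\log N)$; summing and using $\sum_{i\in I_b}m_i\leq k$ produces $R_b:=\sum_{i\in I_b}r_i\leq 4k\log B/(\varepsilon\log N)$, and since $r_i\geq 1$ also $|I_b|\leq R_b$. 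Consequently $R_g:=R-R_b\geq k-x-4k\log B/(\varepsilon\log N)$, and combined with $R_g\geq 0$ this yields $R_g\geq y_x$.

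Multiplying the base bound by the inductive factors gives
\[
\cS(\cU)\leq k^2\cdot (3ek^2)^{|I_b|}N^{R_b}\cdot (5ek^2)^{|I_g|}N^{R_g(1-\varepsilon/8)}.
\]
The quantitative crux is absorbing the $(5ek^2)^{|I_g|}$ constants into powers of $N$: using the hypothesis $N^{\varepsilon/16}\geq 5ek^2$ together with $r_i\geq 1$, one has $5ek^2\leq N^{\varepsilon r_i/16}$, hence $(5ek^2)^{|I_g|}\leq N^{\varepsilon R_g/16}$. Because $-\varepsilon/8+\varepsilon/16=-\varepsilon/16$, the $N$-powers consolidate to $N^{R_b+R_g}N^{-\varepsilon R_g/16}=N^{k-x}N^{-\varepsilon R_g/16}$; invoking $R_g\geq y_x$ and $|I_b|\leq 4k\log B/(\varepsilon\log N)$ gives the claimed bound. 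The main technical point I expect to verify carefully is the existence of a connected ordering of the double cycles, and the bookkeeping that matches the $\varepsilon/8$ appearing in Lemma \ref{induction}(2) against the $\varepsilon/16$ in the proposition's conclusion—precisely the loss incurred by the absorption of the $5ek^2$ factors.
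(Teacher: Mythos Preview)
Your proof is correct and follows essentially the same approach as the paper: the same decomposition into an ordered chain of double cycles starting from a rooted $\cC^\star_{m_1}$, the same good/bad split of indices $i\geq 2$ according to whether $m_i\log B\leq \tfrac{\varepsilon}{4}r_i\log N$, the same absorption of $5ek^2$ into $N^{\varepsilon r_i/16}$, and the same bound $\sum_{i\in I_b}r_i\leq 4k\log B/(\varepsilon\log N)$ (the paper phrases the last step as first bounding $|I_b|$ via $m_i>\tfrac{\varepsilon\log N}{4\log B}$ and then bounding $\sum_{i\notin J}r_i$, but the arithmetic is identical). The only cosmetic difference is that the paper writes the good-index inductive factor directly as $N^{r_i-r_i\varepsilon/16}$ after absorption, whereas you carry $(5ek^2)^{|I_g|}$ separately before collapsing it.
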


\begin{proof}
  By Lemma \ref{veblen} we may represent $\cU$ as the union of double cycles
  $C_1,\dots,C_q$, such that:
  \begin{enumerate}[1)]
  \item $C_1$ is rooted;
  \item for all $i\in[q]$, $C_i$ has $2m_i$ edges;
  \item for $i\geq 2$, $C_i$ has $r_i\geq 1$ common vertices with  $\cup_{j=1}^{i-1}C_j$. 
  \end{enumerate}
  Define the rooted even digraphs $U_i=\bigcup_{j=1}^i C_j$, $i=1,2,\dots,q$.
  Let $\cU_i$ denote the associated equivalence classes. Let $J$ be the set of
  indices $i\geq 2$ such that 
  \[
  m_i\log B\leq\frac{\varepsilon}{4}r_i\log N.
  \]
  Since $m_i>\frac{\varepsilon\log N}{4\log B}$ for any $i\geq 2$, $i\notin
  J$, using $\sum_{i\geq 2} m_i\leq k$ we see that
  \begin{equation*}\label{eqrq} 
    \left|\{2,\dots,q\}\setminus J\right| %
    \leq
    \frac{4k\log B}{\varepsilon\log N}.
  \end{equation*} 
  Since $\cU_1$ is a rooted double cycle with at most $2k$ edges, and we are
  assuming the validity of the event $\cA_k$, by Lemma~\ref{cycle stats} we
  have $\mathcal{S}(\cU_1)\leq k^2$. Moreover, by Lemma~\ref{induction}, one
  has
  \begin{gather*}
    \cS(\cU_i)\leq 3ek^2\mathcal{S}(\cU_{i-1})N^{r_i},\qquad i\in\{2,\dots,q\}\setminus J\\
    \cS(\cU_i)\leq 5ek^2\mathcal{S}(\cU_{i-1})N^{r_i-r_i\varepsilon/8}\leq
    \mathcal{S}(\cU_{i-1})N^{r_i-r_i\varepsilon/16},\qquad i\in
    J,
  \end{gather*} 
  where we used the assumption $5ek^2\leq N^{\varepsilon/16}$. Next, observe
  that
  \[
  \sum\limits_{i=2}^q r_i=k-x.
  \]
  Thus, combining the above estimates one has
  \[
  \cS(\cU)\leq N^{k-x}N^{-\varepsilon y'/16}k^2\bigl(3ek^2\bigr)^{\frac{4k\log B}{\varepsilon\log N}},
  \]
  where $y'=\sum\limits_{i\in J}r_i$. Note that
  \[
  \sum\limits_{i\notin J }r_i\leq\sum\limits_{i\notin J }\frac{4m_i\log B}{\varepsilon\log N} %
  \leq \frac{4k\log B}{\varepsilon\log N},
  \]
  implying that $y'\geq k-x-\frac{4k\log B}{\varepsilon\log N}$. The proof is
  complete.
\end{proof}

\section{Proof of Theorem \ref{main}}\label{mainproof}


Let $\cB$ denote the event that $|X_{ij}|\leq N^{2}$ for all $(i,j) \in
[N]\times [N]$. An 
application of Markov's inequality and the assumption $\dE[| X_{ij}|^2]\leq1$
shows that $\dP(\cB)\geq 1-1/N^2$. Thus, if we define $\cE_k:=\cA_k\cap \cB$,
where $\cA_k$ is the event from Lemma \ref{cycle stats},
then 
\begin{align}\label{eqb20}
  \dP(\cE_k)\geq 1 - N^{-2} - 6k^{-1}.
\end{align}
We are going to choose eventually $k\sim (\log N)^2$. Therefore, thanks to
\eqref{eqb20}, to prove the theorem it will be sufficient to prove the
conditional statement
\begin{equation}\label{main33}
  \dP\left(\rho(X_N)\geq (1+\delta)\sqrt N\mid \cE_k\right)\leq C(\log N)^{-2}.
\end{equation}
To prove this, we estimate the conditional moments
$\dE[\rho(X_N)^{2k-2}\mid\cE_k]$. From the expansion in \eqref{main5} one has
\begin{equation}\label{main55}
  \dE[\rho(X_N)^{2k-2}\mid\cE_k]
  \leq \sum_{i,j}\sum_{P_1,P_2:i\mapsto j}
  \dE[w(P_1)\bar w(P_2)\mid\cE_k]\,,
\end{equation}
where the internal sum ranges over all paths $P_1$ and $P_2$ of length $k-1$
from $i$ to $j$, the weight $w(P)$ of a path is defined by \eqref{main6}, and
$\bar w(P)$ denotes the complex conjugate of $w(P)$. 

Notice that since $|X_{i,j}|\leq N^2$ on the event $\cE_k$, all expected
values appearing above are well defined. By the symmetry assumption we can
replace the variables $X_{i,j}$ by
\[
X'_{i,j}=\theta_{i,j}X_{i,j}
\]
where $\theta_{i,j}\in\{-1,+1\}$ are symmetric i.i.d.\ random variables,
independent from the $\{X_{i,j}\}$. Conditioning on $\cE_k$ the entries
$X'_{ij}$ are no longer independent. However, since $\cE_k$ is measurable with
respect to the absolute values $\{|X_{ij}|\}$, the signs $\theta_{i,j}$ are
still symmetric and i.i.d.\ after conditioning on $\cE_k$. It follows that
\[
\dE\left[w(P_1)\bar w(P_2)\mid\cE_k\right]=0,
\]
whenever there is an edge with odd multiplicity in $P_1\cup P_2$. Thus, in
\eqref{main55} we may restrict to $P_1,P_2$ such that each edge in $P_1\cup
P_2$ has even multiplicity. Let $P$ denote the closed path obtained as
follows: start at $i$, follow $P_1$, then add the edge $(j,i)$, then follow
$P_2$, then end with the edge $(j,i)$ again. Thus, $P$ is an even closed path
of length $2k$. Note that according to our definition \eqref{prg}, if $G_P$ is
the rooted even digraph generated by the path $P$, with root at the edge
$(j,i)$, then
\begin{equation}\label{prwp}
|w(P_1)\bar w(P_2)| =p_r(G_P).
\end{equation}
Since the map $(P_1,P_2)\mapsto P$ is injective, \eqref{main55} and \eqref{prwp} allow us to
estimate
\begin{equation}\label{main501}
  \dE\left[\rho(X_N)^{2k-2}\mid\cE_k\right]
  \leq \sum_{P}\dE\left[p_r(G_P)\mid \cE_k\right],
\end{equation}
where the sum ranges over all even closed paths $P=(i_1,\dots,i_{2k+1})$ of
length $2k$ and $G_P$ is defined as the rooted even digraph generated by the
path $P$, with root at the edge $(i_k,i_{k+1})$. By Lemma \ref{paths and
  graphs}, the sum in \eqref{main501} can be further estimated by
\begin{equation}\label{paths to graphs}
  k \sum_{x=1}^k (4k)^{4(k-x)}\!\!\! \sum_{G\in \mathcal{G}_N(k,x)}\!\!\dE\left[p_r(G)\mid \cE_k\right],
\end{equation}
where we used $x (4k-4x)!\leq k (4k)^{4(k-x)}$, and $\mathcal{G}_N(k,x)$
denotes the set of all rooted even digraphs with $2k$ edges and $x$ vertices.
Below we estimate $\sum_{G\in \mathcal{G}_N(k,x)}p_r(G)$ deterministically on
the set $\cE_k$. Using the second inequality in \eqref{asz} one has, for any
$G\in \mathcal{G}_N(k,x)$:
\[
p_r(G)\leq 1 + 2\sum_{h=0}^{\infty}2^{h}\mathbf{1}_{p_r(G)\geq 2^h}.
\]
Since on the event $\cE_k$ all entries satisfy $|X_{i,j}|\leq N^2$, it follows
that $p_r(G)\leq N^{4k-4}$. Therefore the above sum can be truncated at
\[
H:=\lfloor4k\log_2 N\rfloor.
\]
Let $\cU$ be a given equivalence class of rooted even digraphs with $x$
vertices and $2k$ edges. Summing over all $G\sim \cU$, and recalling
\eqref{statistics},
\[
\sum_{G\sim\cU}p_r(G)\leq \Big(1+2\sum_{h=0}^H\cS_h(\cU)\Big)\left|\{G\sim\cU\}\right|\leq 
3H\cS(\cU)\left|\{G\sim\cU\}\right|.
\]
From Proposition \ref{exp prop}, on the event $\cE_k$ we can then estimate
\[
\sum_{G\sim\cU}p_r(G)\leq 
3H N^{k-x}N^{-\varepsilon y_x/16}k^2\bigl(3ek^2\bigr)^{\frac{4k\log B}{\varepsilon\log N}}\left|\{G\sim\cU\}\right|,
\]
where $y_x=\max\bigl(0,k-x-\frac{4k\log B}{\varepsilon\log N}\bigr)$. Summing
over all equivalence classes $\cU$ of rooted even digraphs with $x$ vertices
with $2k$ edges, on the event $\cE_k$ one obtains
\begin{equation}\label{thstg}
  \sum_{G\in \mathcal{G}_N(k,x)}p_r(G) %
  \leq  3H N^{k-x}N^{-\varepsilon y_x/16}k^2\bigl(3ek^2\bigr)^{\frac{4k\log B}{\varepsilon\log N}}\left|\cG_N(k,x)\right|.
\end{equation}
Going back to \eqref{paths to graphs}, using \eqref{thstg}, and Lemma
\ref{graphs counting} to estimate $\left|\cG_N(k,x)\right|$, one finds
\begin{equation}\label{maina}
  \dE\left[\rho(X_N)^{2k-2}\mid\cE_k\right]
  \leq 3Hk^4N^{k}\bigl(3ek^2\bigr)^{\frac{4k\log B} {\varepsilon\log N}}
  \sum_{x=1}^k (4k)^{6(k-x)}N^{-\varepsilon y_x/16}
\end{equation}
Fix $k\sim(\log N)^2$. 
If $x\leq k-\frac{8k\log B}{\varepsilon\log N}$, then  $y_x\geq(k-x)/2$ and therefore
\[
(4k)^{6(k-x)} 
N^{-\varepsilon y_x/16} \leq  (4k)^{6(k-x)}  N^{-\varepsilon (k-x)/32} \leq 1,
\]
provided that $N$ is sufficiently large.
It follows that %
\[
\sum_{x=1}^k (4k)^{6(k-x)}N^{-\varepsilon y_x/16}
\leq k + \tfrac{8k\log B}{\varepsilon\log N}(4k)^{\frac{48k\log B}{\varepsilon\log N}}.
\]
From \eqref{maina},  for large enough $N$ and $k\sim (\log N)^2$, one has
\begin{equation}\label{mainas}
  \dE\left[\rho(X_N)^{2k-2}\mid\cE_k\right]\leq N^k (\log N)^{C\log N},
\end{equation}
where $C=C(\veps,B)>0$ is a constant depending only on $\veps,B$.   
The proof of \eqref{main33} is concluded by using 
Markov's inequality: for any $\delta>0$, 
\begin{align*}
  \dP(\rho(X_N)\geq (1+\delta)\sqrt N\mid \cE_k)%
  &\leq (1+\delta)^{-2k+2} N^{-k+1} \dE[\rho(X_N)^{2k-2}\mid\cE_k] \\
  &\leq (1+\delta)^{-2k+2}N (\log N)^{C\log N}.
\end{align*}
Since $k\sim (\log N)^2$, for fixed $\delta>0$, the expression above is
$\mathcal{O}(N^{-\gamma})$ for any $\gamma>0$. This ends the proof of Theorem
\ref{main}.
 


\end{document}